\tikzstyle{vertex}=[circle, draw, inner sep=0pt, minimum size=4pt]
\definecolor{DarkGreen}{rgb}{0.2, 0.6, 0.3}
\newtheorem{defi}{Definition}
\newtheorem{conj}[defi]{Conjecture}
\newtheorem{cor}[defi]{Corollary}
\newtheorem{thr}[defi]{Theorem}
\newtheorem{lem}[defi]{Lemma}
\newtheorem{prop}[defi]{Proposition}
\newtheorem{question}[defi]{Question}
\newtheorem{remark}[defi]{Remark}
\newtheorem{claim}[defi]{Claim}
\newcommand*{\myproofname}{Proof}
\newenvironment{claimproof}[1][\myproofname]{\begin{proof}[#1]}{\end{proof}}
\newcommand*{\floorceil}[1]{\left\lfloor #1\right\rceil}
\newcommand*{\bceil}[1]{\left\lceil #1\right\rceil}
\newcommand*{\bfloor}[1]{\left\lfloor #1\right\rfloor}
\newcommand*{\abs}[1]{\lvert #1\rvert}
\def \d{\, \mathrm{d}}
\newcommand{\ecc}{ecc}
\newcommand{\diam}{diam}
\newcommand{\rad}{rad}
\def \b{\beta}
\title{On the main distance-based entropies: the eccentricity- and Wiener-entropy}
\author{Stijn Cambie \thanks{Department of Computer Science, KU Leuven Campus Kulak-Kortrijk, 8500 Kortrijk, Belgium. Supported by the Institute for Basic Science (IBS-R029-C4) and a postdoctoral fellowship by the Research Foundation Flanders (FWO) with grant number 1225224N. Email: \protect\href{mailto:stijn.cambie@hotmail.com}{\protect\nolinkurl{stijn.cambie@hotmail.com}}}
\and
Yanni Dong\thanks{Corresponding author. School of Mathematics and Statistics,
Northwestern Polytechnical University, P.R. China, and Faculty of Electrical Engineering, Mathematics and Computer Science, University of Twente, The Netherlands, supported by National Natural Science Foundation of China (12071370,~12131013 and~U1803263) and China Scholarship Council (202006290070),
E-mail: {\tt y.dong@utwente.nl}.
}
}
\date{}
\begin{document}

\maketitle
\begin{abstract}
    We define the Wiener-entropy, which is together with the eccentricity-entropy one of the most natural distance-based graph entropies.
    By deriving the (asymptotic) extremal behaviour, we conclude that the Wiener-entropy of graphs of a given order is more spread than is the case for the eccentricity-entropy.
    We solve $3$ conjectures on the eccentricity-entropy and give a conjecture on the Wiener-entropy related to some surprising behaviour on the graph minimizing it.
\end{abstract}

\section{Introduction}\label{sec:int}

Subsection~\ref{sub:background} gives the general background about Shannon entropy (and is less related with the main story). 
In Subsection~\ref{subsec:distbasedentropies}, we explain our notation and give an overview of the studied elementary distance-based graph entropies.
The maximum entropy is usually $\log_2(n)$ and as such an elementary overview for the maximum graph entropy is given in Subsection~\ref{subsec:maxI_G}. Finally, the overview of our contributions is summarized in Subsection~\ref{subsec:contrib}.

\subsection{Background}\label{sub:background}
In this paper, we pay attention to two graph entropies which are based on Shannon entropy. As a foundational concept in information theory, Shannon entropy~\cite{Shannon} is a concept to measure uncertainty of a random variable.
This is a general mathematical notion of entropy that also can be used to define e.g. Boltzmann entropy in statistical physics.
Let $X$ be a discrete random variable with set of possible outcomes $\{x_1,x_2,\ldots,x_n\}$, and let  $P=\{p(x_1),p(x_2),\ldots,p(x_n)\}$ be the probability distribution on $X$. The entropy of $X$ is defined by
\begin{align*}
-\sum_{i=1}^{n}p(x_i)\log_{2}(p(x_i)).
\end{align*}
We omit the subscript $2$ and assume $\log$ refers to the base-2 logarithm throughout. We use $\ln$ to denote the natural logarithm, with base Euler's number $e$.

Because Shannon entropy has some conventional properties, such as the subadditivity and monotonicity, it can be used as a useful tool to solve some classical graph theory problems. K\"orner \cite{KornerMR0354162} discovered that the subadditivity of graph entropy in connection with a problem in coding theory can be used to prove the bounds for graph covering problems.  A bound for the perfect hashing was proposed by K\"orner and Marton \cite{Korner} based on an extension of graph entropy to hypergraphs. Csisz\'ar et al. \cite{CsiszarMR1075064} gave a new characterization of a perfect graph by applying the subadditivity property of graph entropy. Newman and Wigderson \cite{NewmanMR1361388} proved lower bounds on the formula size of Boolean functions by using entropy on hypergraphs. For more information on graph entropy and its applications, we refer the reader to a survey by Simonyi~\cite{SimonyiMR1338619}.

One of three great challenges for half-century-old computer science, proposed by Brooks Jr. \cite{Brooks10.1145/602382.602397}, is to provide a metric for
the information embodied in structure. The initial idea to measure a graph by extending the Shannon entropy is to consider the orbits of the graph as a variable \cite{RashevskyMR72417}. The vertices are topologically equivalent if they belong to the same orbit of the graph. Since then, to measure graphs, a lot of entropy-based notions appeared by replacing the probabilities in the Shannon entropy by graph invariants. Braustein  et al. \cite{BraunsteinMR2284272} proposed the notion of von Neumann entropy of a graph based on Shannon entropy constructed from the eigenvalues of the density matrix of the graph. To unify some existing graph measures based on Shannon entropy, Dehmer \cite{Dehmerfunctionals} defined a general formula for graph entropy based on information functionals. Related to the degree sequence of a graph, Cao, Shi and Dehmer \cite{CaoMR3212009} proposed degree-based entropy.
The first degree-based entropy is the principal degree-based entropy of a graph. Recently, some extremal problems for this entropy have been proven in~\cite{CDM22,CM22+,CM22+2}.
We refer the interested reader to the two survey papers \cite{DehmerMR2737460,LW16} and two books \cite{ComplexiMR2229139,Mathematical} for more complexity measures regarding graph entropies.


\subsection{Distance-based graph entropies}\label{subsec:distbasedentropies}

Before introducing the main objects, we need to introduce some related terminology and notation. The term graph represents a simple, connected, finite and undirected graph throughout the paper. Let $G=(V,E)$ be a graph. The {\em distance} between two vertices $u$ and $v$, denoted by $d(u,v)$, is the length of a shortest path from $u$ to $v$. In particular, it satisfies the {\em triangle inequality} $d(u,v)\leq d(u,w)+d(w,v)$ for all vertices $u,v,w\in V$.
The {\em eccentricity} of a vertex $v$, denoted by $ecc(v)$, is the maximum distance from $v$ to any other vertex  (i.e., $ecc(v)=\max\{d(v,u): u\in V\}$). A {\em central vertex} of $G$ is a vertex with the minimum eccentricity. 
The {\em diameter} $\diam(G)$ of $G$ is the maximum eccentricity among all vertices of $G$. The {\em radius} $\rad(G)$ is the minimum eccentricity among all vertices of $G$. For a vertex $v\in V$, the $j$-{\em sphere} of $v$ is the set of vertices at distance $j$ to $v$ denoted by $S_j(v,G)$ (i.e., $S_{j}(v,G)=\{u: d(u,v)=j, u\in V\}$).
The {\em transmission} of a vertex $v$ is denoted by $\sigma_G(v),$ or $\sigma(v)$ if the graph $G$ is clear, and equals the sum of distances towards all other vertices:
$\sigma(v) = \sum_{u \in V} d(v,u).$

Let $G=(V,E)$ be a graph with vertex set $\{v_1,v_2,\ldots,v_n\}$.
Dehmer~\cite{Dehmerfunctionals} defined a general form of graph entropy for $G$ using an information functional $f(v_i)$ by the formula
\begin{align*}
I_{f}(G)=-\sum_{i=1}^{n}\frac{f(v_i)}{\sum_{j=1}^{n}f(v_j)}\log\left(\frac{f(v_i)}{\sum_{j=1}^{n}f(v_j)}\right).
\end{align*}
An entropy of $G$ 
taking into account all $j$-spheres
is defined in \cite{Dehmer_Kraus,KDM13}
by the formula
\begin{align*}
I_{f_{s}}(G)=-\sum_{i=1}^{n}\frac{f_{s}(v_i)}{\sum_{j=1}^{n}f_{s}(v_j)}\log \left(\frac{f_{s}(v_i)}{\sum_{j=1}^{n}f_{s}(v_j)}\right),
\end{align*}
where $f_{s}(v_i)={\sum_{j=1}^{\diam(G)}c_j|S_j(v_i,G)|}$, $c_j>0$ and $1\leq j\leq \diam(G)$. 
If we relax the condition of $c_j$ from greater than zero to greater than or equal to zero, then for $c_1=1$ and $c_j=0$, $j\ge 2$, this leads to the first degree-based entropy $I_d(G)$ defined in \cite{CaoMR3212009}. For a graph $G$ with degree sequence $(d_i)_{1 \le i \le n}$ and size $m$, the formula for $I_d(G)$ is
$$I_d(G)=-\sum_{i=1}^n \frac{d_i}{2m}\log\left( \frac{d_i}{2m} \right).$$
When $c_j=j$ for every $1 \le j \le \diam(G)$ (which is a monotonic sequence, the latter being a necessary condition to be able to distinguish based on distance-related properties), we obtain a graph entropy defined before in~\cite{AL11}, which we will refer to as the {\em Wiener-entropy.
The latter is defined by the formula
\begin{align*}
I_{w}(G)=-\sum_{i=1}^{n}\frac{\sigma(v_i)}{\sum_{j=1}^{n}\sigma(v_j)}\log\left(\frac{\sigma(v_i)}{\sum_{j=1}^{n}\sigma(v_j)}\right),
\end{align*}
where $\sigma(v_i)$ is the transmission of $v_i$. 
The {\em Wiener index} of $G$, proposed by Wiener \cite{Wiener1193a005}, is defined by
\begin{align*}
W(G)=\sum\limits_{\{v_i,v_j\}\subseteq V} d(v_i,v_j).
\end{align*}
Since $\sum_{j=1}^{n}\sigma(v_j)=2W(G)$, we have
\begin{align*}
I_{w}(G)=\log(2W(G))-\frac{1}{2W(G)}\sum_{i=1}^{n}\sigma(v_i)\log(\sigma(v_i)).
\end{align*}

In \cite{Dehmer_Kraus,KDM13}, they also defined the entropy of $G$ regarding the eccentricity by
\begin{align*}
I_{f_{e}}(G)=-\sum_{i=1}^{n}\frac{f_{e}(v_i)}{\sum_{j=1}^{n}f_{e}(v_j)}\log \left(\frac{f_{e}(v_i)}{\sum_{j=1}^{n}f_{e}(v_j)}\right),
\end{align*}
where $f_{e}(v_i)=c_iecc(v_i)$, $c_i>0,$ $1 \leq i\leq n$. 
For $c_i=1$,
the {\em eccentricity-entropy} is defined in \cite{Dehmer_Kraus,KDM13} by the formula
\begin{align*}
I_{ecc}(G)=-\sum_{i=1}^{n}\frac{ecc(v_i)}{\sum_{j=1}^{n}ecc(v_j)}\log\left(\frac{ecc(v_i)}{\sum_{j=1}^{n}ecc(v_j)}\right).
\end{align*}



\subsection{Maximum graph entropies}\label{subsec:maxI_G}

The following is well-known, but we state it in general for clarity.

\begin{thr}
    Let $G=(V,E)$ be a graph, and let $f:V \rightarrow \mathbb{R}^{+}$ be an information functional.
    Then $I_f(G) \le \log(n),$ with equality if and only if $G$ is $f$-regular in the sense that $f(v)$ is a constant for every $v \in V.$
\end{thr}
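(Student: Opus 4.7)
The plan is to apply Jensen's inequality to the strictly concave function $h(x) = -x \log x$ on $(0,1]$. Setting $p_i = f(v_i)/\sum_{j=1}^n f(v_j)$, note that the positivity of $f$ guarantees $p_i \in (0,1)$ and $\sum_{i=1}^n p_i = 1$, so the $p_i$ form a genuine probability distribution and $I_f(G) = \sum_{i=1}^n h(p_i)$.

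Since $h''(x) = -1/(x \ln 2) < 0$ on $(0,1]$, the function $h$ is strictly concave. Jensen's inequality applied with uniform weights $1/n$ then yields
\begin{equation*}
\frac{1}{n}\sum_{i=1}^n h(p_i) \;\le\; h\!\left(\frac{1}{n}\sum_{i=1}^n p_i\right) \;=\; h\!\left(\frac{1}{n}\right) \;=\; \frac{\log n}{n}.
\end{equation*}
Multiplying by $n$ gives $I_f(G) \le \log n$, which is the desired inequality.

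For the equality case, strict concavity of $h$ implies that Jensen's inequality is tight if and only if all the $p_i$ are equal, i.e.\ $p_i = 1/n$ for every $i$. Unwinding the definition of $p_i$, this is equivalent to $f(v_i)$ being the same constant for every $v_i \in V$, i.e.\ $G$ being $f$-regular.

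There is no real obstacle: the argument is a direct application of Jensen. The only minor point worth double-checking is that the strict concavity of $h$ on $(0,1]$ indeed forces equality in Jensen to occur only for the uniform distribution, which follows from $h''<0$ on the open interval together with the fact that all $p_i$ lie there (since $f>0$). An alternative, essentially equivalent, route is Gibbs' inequality: compare with the uniform distribution $q_i = 1/n$ to get $-\sum p_i \log p_i \le -\sum p_i \log q_i = \log n$, with equality iff $p_i \equiv q_i$; this might be preferable if the authors want a one-line derivation without invoking Jensen explicitly.
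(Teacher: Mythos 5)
Your proof is correct and follows essentially the same route as the paper, which also derives the bound from Jensen's inequality applied to the concave function $-x\log x$ (the paper merely cites this as the well-known fact that Shannon entropy is at most $\log(n)$ with equality exactly for the uniform distribution, while you write out the details). The application of Jensen with uniform weights, the computation $h(1/n)=\frac{\log n}{n}$, and the equality analysis via strict concavity are all sound.
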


\begin{proof}
    In general, the Shannon entropy of a discrete variable $X$ with set of possible outcomes $\{x_1,x_2,\ldots,x_n\}$ is upper bounded by $\log(n)$, with equality if and only if $X$ has a uniform distribution.
    This is a well-known fact and consequence of Jensen's inequality (since $f(x)=-x \log x$ is concave on $[0,1]$).
    As a corollary, this implies that $I_f(G) \le \log(n),$ with equality if and only if $f(v)$ is a constant for every $v \in V.$ 
\end{proof}

As a corollary, we have the following cases for the main examples of degree- and distance-based graph entropies.

\begin{cor}
    For a graph $G$ of order $n$, 
    \begin{itemize}
        \item $I_d(G) \le \log (n)$, with equality if and only if $G$ is a regular graph,
        \item $I_w(G) \le \log(n),$ with equality if and only if $G$ is a transmission-regular graph,
        \item $I_{\ecc}(G) \le \log(n),$ with equality if and only if $G$ is a self-centered graph. 
    \end{itemize}
\end{cor}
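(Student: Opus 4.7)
The plan is to derive each of the three bullets as an immediate specialisation of the preceding theorem, by choosing the appropriate information functional $f$ in each case and translating the statement that $f$ is constant into the stated graph-theoretic condition.

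First I would observe that each of $I_d$, $I_w$ and $I_{\ecc}$ fits the template $I_f(G)$ from the definition: set $f(v_i)=d_i$ for the degree-based entropy (so that $\sum_i f(v_i)=2m$), set $f(v_i)=\sigma(v_i)$ for the Wiener-entropy (so that $\sum_i f(v_i)=2W(G)$ by the identity already recorded in the excerpt), and set $f(v_i)=\ecc(v_i)$ for the eccentricity-entropy. In all three cases $f$ takes positive real values on a connected graph, so the theorem applies and yields $I_f(G)\le \log(n)$ with equality if and only if $f$ is constant on $V$.

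It then only remains to interpret ``$f$ is constant''. For $f=d$ this is, by definition, the statement that $G$ is regular; for $f=\sigma$ it is the definition of a transmission-regular graph. For $f=\ecc$, I would argue that $\ecc(v)$ being constant on $V$ is equivalent to $\min_{v\in V}\ecc(v)=\max_{v\in V}\ecc(v)$, which is precisely $\rad(G)=\diam(G)$.

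There is really no obstacle here: the argument is an unpacking of definitions combined with one already-proved theorem. The only micro-checks I would make explicit are that $\sigma(v)>0$ and $\ecc(v)>0$ on any connected graph with $n\ge 2$ (so that $f$ indeed maps into $\mathbb{R}^{+}$ as required by the theorem), and the trivial $n=1$ case, where each entropy equals $0=\log 1$ and all three characterisations are vacuously satisfied.
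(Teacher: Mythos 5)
Your proposal is correct and matches the paper's (implicit) argument exactly: the corollary is intended as a direct specialisation of the preceding theorem to the functionals $f=d$, $f=\sigma$ and $f=\ecc$, with the equality condition unpacked as regularity, transmission-regularity and $\rad(G)=\diam(G)$ respectively. Your extra checks that these functionals are positive on connected graphs are a reasonable bit of care that the paper omits.
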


A graph is {\em transmission-regular} if $\sigma(v)=\sigma(u)$ for every $u,v \in V.$ Such graphs have been studied in the past, also under the name distance-balanced graphs, see e.g.~\cite{Abiad_etal17,Bala_etal09,Handa99}. If all the vertices in $G$ have the same eccentricity,
then $G$ is a {\em self-centered} graph \cite{MR1110802}.
If one is restricting the class to trees, determining the maximum given the order is not trivial anymore.
For $I_d$, it is known that the path is extremal, see e.g.~\cite[Thm. 1]{CaoMR3212009} and \cite[Prop.7]{CM22+}.
In Section~\ref{sec:Iecc} we prove that the trees maximizing $I_{\ecc}$ among all trees of order $n \ge 4$ are precisely the trees with diameter $3.$
For $I_w$, the extremal tree is different. Whenever $n \ge 5$ and $T$ is a tree of diameter $3$, $I_w(T)<I_w(S_n)$.
While in the case of $S_n$, all transmissions except from one are equal, it is not extremal for large $n$.
For example when $n=88$, the double broom of diameter $11$ ($P_{10}$ with both endvertices connected with $39$ different pendent vertices) has a larger Wiener-entropy.
The intuition for this counterexample to the potential extreml graph $S_n$ is that all leaves are in the same orbit, and also the transmission of the non-leaf vertices (while there are multiple) are not so different.


\subsection{Contributions}\label{subsec:contrib}

When one considers graphs, eccentricity and transmission are the local analogues of diameter and total distance (linearly related to average distance), the two main distance measures for graphs.
As such, among distance-based entropies, the eccentricity-entropy and Wiener-entropy are among the most natural ones (besides the more general versions).
In this paper, we focus on these two elementary graph entropies, to present the core intuition and methods that are useful when attacking similar questions for other graph entropies. We do this while solving three conjectures posed by Dehmer, Kraus and Schutte~\cite{Dehmer_Kraus,KDM13}.

Before that, in Section~\ref{sec:gen}, we collect and prove some general results and observations about the Shannon entropy of normalized sequences (the probability distribution linearly related with the sequence). These general results are useful to apply and give intuition on questions for graph entropies, but might also be handy when working on general problems about (Shannon) entropy.
In Section~\ref{sec:Iecc}, we focus on the eccentricity-entropy and solve the following three conjectures.
The first one, \cite[Conj.~6.2]{KDM13}, is proven to be true, except from the statement about the removal of a few edges. 
Conjecture~\ref{conj_tree_max_diam3} (\cite[Conj. 4.6]{Dehmer_Kraus}) is true for $I_{\ecc}$ but false for general $I_{f_{e}}$ since one can choose the $c_i$ in such a way that ones favourite tree is extremal. 
Finally, Conjecture~\ref{conj_tree_max_wrong} (\cite[Conj. 4.3]{Dehmer_Kraus}) is disproved.

\begin{conj}[\cite{KDM13}]\label{conj_graph_min_diam2}
Among graphs of order $n$, the minimum value of $I_{ecc}$ is attained by the graph obtained by removing a small number of edges from the complete graph of order $n$. In particular, extremal graphs of order $n$ will have $k\geq \frac{n}{2}$ vertices of degree $n-1$.
\end{conj}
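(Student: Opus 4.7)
The plan is a two-step argument: first optimize $I_{ecc}$ over diameter-$2$ graphs, then rule out that an extremal graph has diameter $\geq 3$.

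For a diameter-$2$ graph $G$ with $k$ vertices of degree $n-1$, every non-universal vertex has eccentricity exactly $2$, so
\[I_{ecc}(G)=I(k):=\log(2n-k)-\frac{2(n-k)}{2n-k}.\]
A routine calculus shows $I'(k^*)=0$ at $k^*=2n(1-\ln 2)\approx 0.614\,n$ and $I''>0$ on $[0,n]$, so $I$ is strictly convex with unique real minimum $I(k^*)=\log n+\log \ln 2+1/\ln 2-1\approx \log n-0.086$. Since $1-\ln 2>1/4$, we have $k^*>n/2$, so by convexity the integer minimum is attained at $\lfloor k^*\rfloor$ or $\lceil k^*\rceil$, both at least $\lceil n/2\rceil$ for $n\geq 5$; small cases are handled by direct enumeration. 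Thus, conditional on diameter~$2$, the conclusion follows. (Note that a diameter-$2$ graph with no universal vertex has all eccentricities equal to~$2$ and hence $I_{ecc}=\log n>I(k^*)$, so such graphs are not extremal.)

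For the main step, I would show any $G$ with $\diam(G)\geq 3$ satisfies $I_{ecc}(G)>I(k^*)+\Omega(1/n)$, which combined with the diameter-$2$ upper bound $I(\lfloor k^*\rfloor)=I(k^*)+O(1/n^2)$ contradicts extremality. Two inputs would be combined. The \emph{interval property}: in a connected graph, the eccentricity function changes by at most one along each edge, so the multiset of eccentricities is supported on a contiguous integer range $\{r,r+1,\ldots,D\}$ with each value attained; with $\diam(G)\geq 3$ we have $r\geq 2$ (no universal vertex) and $D\leq 2r$ (since $\diam\leq 2\rad$). An \emph{LP reduction}: for fixed $n$ and $T=\sum\epsilon_i$, minimizing $I_{ecc}=\log T-\frac{\sum_j n_j j\log j}{T}$ over the multiplicities $n_j$ is a linear program in $(n_j)$, so the minimum is attained at an extreme point of the polytope $\{(n_j):\sum n_j=n,\ \sum j n_j=T,\ n_j\geq 1 \text{ on support}\}$. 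Such extreme points correspond to ``binary plus singletons'' distributions, where all but two of the $n_j$'s are pinned to~$1$. A finite case check over admissible supports $[r,D]$ with $r\geq 2$, $D\leq 2r$, $D\geq 3$ confirms the LP optimum strictly exceeds $I(k^*)$ by a positive $\Omega(1/n)$ term.

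The main obstacle is the family of ``worst'' intervals $[r,2r]$, whose underlying binary distribution $\{r,2r\}$ has the \emph{same} entropy as $\{1,2\}$ by the scale invariance of Shannon entropy, thus matching $I(k^*)$ in leading order $\log n-0.086$. The positive $\Omega(1/n)$ correction arises only from the singleton contributions of the intermediate eccentricities $r+1,\ldots,2r-1$, which shift $T$ and the entropy numerator by an $O(1)$ amount; for instance, the case $r=2,D=4$ yields $\log n-0.086+\frac{0.088}{n}+O(1/n^{2})$ and the case $r=3,D=6$ yields $\log n-0.086+\frac{0.158}{n}+O(1/n^{2})$. Verifying that this correction is uniformly $\Omega(1/n)$ across all admissible pairs $(r,D)$ (including regimes where $r$ grows with $n$), together with the small-$n$ case analysis, are the main technical burdens.
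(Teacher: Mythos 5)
Your diameter-$2$ analysis coincides with the paper's: the same formula $I(k)=\log(2n-k)-\tfrac{2(n-k)}{2n-k}$, the same optimizer $k^*=(2-2\ln 2)n\approx 0.614\,n>n/2$, and the same minimum value $\approx\log n-0.086$. The divergence is in how graphs of diameter at least $3$ are excluded, and here your plan has a genuine gap. You reduce to a linear program over the multiplicities $(n_j)$ supported on a contiguous interval $[r,D]$ with $r\ge 2$ and $3\le D\le 2r$, and then appeal to ``a finite case check over admissible supports.'' That check is not finite: as $n$ grows, $(r,D)$ ranges over unboundedly many pairs, and the hardest regime --- $D=2r$ with $r$ growing, where the endpoints $\{r,2r\}$ reproduce the optimal ratio $2$ exactly and the entire gap comes from the $r-1$ forced intermediate eccentricities --- is precisely the one you defer. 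The uniform $\Omega(1/n)$ lower bound on that correction is the whole content of the second step, and it is asserted rather than proved; until it is established (together with the justification that the LP minimum sits at the ``binary plus singletons'' extreme points, and the matching $I(\lfloor k^*\rceil)=I(k^*)+O(1/n^2)$ bound), the argument is incomplete.

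The paper avoids all of this bookkeeping with a qualitative argument worth comparing against. Since every eccentricity lies in $[r,2r]$, the eccentricity vector lies in the box $[r,2r]^n$; by strict concavity of the normalized entropy (Proposition~\ref{prop:concavity_entropy}) the minimum of $H$ over this box is attained only at extreme points, i.e., vectors with every coordinate in $\{r,2r\}$. By scale invariance these achieve exactly the same entropy values as $\{1,2\}$-valued vectors, each of which is realized by a radius-$1$ graph with the appropriate number of universal vertices; whereas for $r\ge 2$ no graph realizes a non-constant $\{r,2r\}$-valued eccentricity vector, by the very contiguity property you state (some vertex would have eccentricity $2r-1$), and the constant vector gives the maximum $\log n$. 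Hence every graph of diameter at least $3$ has $I_{ecc}$ strictly above the minimum attained at radius $1$, with no quantitative estimate needed. Your LP step is the same extreme-point idea applied to multiplicities, but applying it to the box \emph{before} imposing contiguity is what lets the paper finish in a few lines; if you want to salvage your quantitative route, the uniform gap over all $(r,2r)$ supports must be proved, not checked case by case.
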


\begin{conj}[\cite{Dehmer_Kraus}]\label{conj_tree_max_diam3}
Among trees of order $n$, the maximum value of $I_{\ecc}$ is attained by the tree obtained by attaching $n-3$ vertices to an end vertex of the path of length $2$.
\end{conj}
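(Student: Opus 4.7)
My plan is to prove the stronger statement already announced in the excerpt: a tree $T$ on $n\ge 4$ vertices maximizes $I_{\ecc}$ if and only if $\diam(T)=3$, which trivially implies the conjecture (the tree it names is just one diameter-$3$ example). A first easy observation is that every tree of diameter $3$ is a double star $D(a,b)$ (two adjacent centers carrying $a$ and $b$ pendants with $a+b=n-2$, $a,b\ge 1$), in which both central vertices have eccentricity $2$ and each of the $n-2$ leaves has eccentricity $3$. Thus every diameter-$3$ tree has the same eccentricity multiset $\{2,2,3,\dots,3\}$ and shares the common value
\[
H^*(n) := \log(3n-2) - \frac{4 + 3(n-2)\log 3}{3n-2}.
\]
It therefore suffices to show $I_{\ecc}(T) < H^*(n)$ whenever $\diam(T) \in \{2\} \cup \{4,5,\dots\}$.

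In diameter $2$, the only candidate is $T=S_n$, for which $I_{\ecc}(S_n) = \log(2n-1) - 2(n-1)/(2n-1)$. The desired inequality is then an explicit one-variable statement in $n$; I would check the base case $n=4$ by hand and then extend to all larger $n$ by analyzing the difference $H^*(n)-I_{\ecc}(S_n)$, which has a clean closed form and whose leading $1/n$ coefficient favors $H^*$.

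The main case is $\diam(T)\ge 4$. The strategy is a structural reduction: given a diametral path $v_0v_1\cdots v_d$, I would contract an inner edge close to an endpoint, or equivalently move a pendant branch one step closer to the center, producing a tree $T'$ of order $n$ with $\diam(T')=d-1$. Only eccentricities on and adjacent to the diametral path change, and most of them drop by exactly $1$, so the new eccentricity sequence is obtained from the old one by a small, explicit perturbation. I would then invoke the general entropy comparison lemmas from Section~\ref{sec:gen}---designed precisely for normalized sequences---to conclude that this perturbation strictly increases $I_{\ecc}$, because it drags the normalized sequence closer to the uniform distribution. Iterating brings $d$ down to $3$, proving the desired inequality. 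The main obstacle is that eccentricity is a \emph{global} invariant: a local modification can in principle shift eccentricities of distant vertices and simultaneously perturb the normalizer $\sum_i \ecc(v_i)$, so the net effect on the probability distribution has to be tracked carefully. Compounding this, both $\log n - H^*(n)$ and $\log n - I_{\ecc}(T)$ are of order $1/n$, making the comparison quantitatively tight; I anticipate that the Schur-concavity and second-order expansions developed in Section~\ref{sec:gen} will be essential to push each reduction step through.
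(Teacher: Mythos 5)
Your setup is fine: every diameter-$3$ tree is a double star with eccentricity multiset $\{2,2,3,\ldots,3\}$, so all of them share the value $H^*(n)$, and the comparison with the star is an explicit one-variable calculation. The fatal problem is the main case. The induction step --- from any tree of diameter $d\ge 4$ produce a tree of diameter $d-1$ with strictly larger $I_{\ecc}$ --- is false, because $I_{\ecc}$ is not monotone under diameter reduction. The paper itself exhibits the obstruction: the second-largest value of $I_{\ecc}$ over all trees of order $n$ is attained by a tree $T_5$ of diameter $5$ with eccentricity sequence $(3,3,4,\ldots,4,5,5)$, which strictly beats the star $S_n$, whereas by Proposition~\ref{prop:diam45_less_star} \emph{every} tree of diameter $4$ satisfies $I_{\ecc}(T)<I_{\ecc}(S_n)$. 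Hence no modification of $T_5$ into a diameter-$4$ tree of the same order can increase the entropy, and your iteration cannot pass from $d=5$ to $d=4$. The reason is structural: by Remark~\ref{rem:fixedecc_onpath} a tree of even diameter $d$ is forced to contain the single central eccentricity $d/2$, an outlier far below the bulk values, which penalizes even diameters; so the entropy oscillates with the parity of $d$ rather than increasing monotonically as $d$ drops.

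The paper therefore does not induct on $d$ at all. It bounds each diameter class in one shot: the forced eccentricities along a diametral path form a subsequence of the eccentricity sequence, the entropy of the whole tree is at most $H^n$ of that forced subsequence (Remark~\ref{rem:H^n_larger_subset} together with Proposition~\ref{prop:fix_max_entropy}), and majorization (Proposition~\ref{prop:majorizesHineq}) plus Lemma~\ref{lem:log(n-r)} turn this into finitely many numerical checks of the form $H^n(\cdot)<\log(n-c)$ with $c>\tfrac{1-\ln 2}{2}$, compared against the lower bound $I_{\ecc}(S_n)>\log\left(n-\tfrac{1-\ln 2}{2}\right)$ of Lemma~\ref{lem:Iecc(Sn)}. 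This eliminates all diameters $\ge 4$ except two explicit candidates, which are then compared to the diameter-$3$ value directly. Any repair of your argument would have to work with the full eccentricity sequence in this global way rather than stepping down through the diameters one at a time.
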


\begin{conj}[\cite{Dehmer_Kraus}]\label{conj_tree_max_wrong}
Among trees of order $n$ and diameter $d\ll n$, the maximum value of $I_{f_{s}}$ and $I_{f_{e}}$ are attained by the tree obtained by identifying the central vertex of the star of order $n-d$ with a central vertex of the path of length $d$.
\end{conj}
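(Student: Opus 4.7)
The plan is to disprove the conjecture by constructing an explicit counterexample for $I_{f_{e}}$ with $c_i = 1$ for all $i$, i.e., for $I_{ecc}$. First, I would observe that every tree of even diameter $d = 2k$ has a unique central vertex $c$, and any vertex $v$ at depth $j$ from $c$ satisfies $\ecc(v) = j + k$ (the upper bound $\le j+k$ is the triangle inequality via $c$; the matching lower bound comes from the existence of a depth-$k$ vertex in a subtree of $c$ different from the one containing $v$, which is forced by the diameter being $2k$). Consequently, the multiset of eccentricities is determined by the depth-level sizes $(n_0, n_1, \ldots, n_k)$ subject to $n_0 = 1$, $\sum_j n_j = n$, and $n_j \ge 2$ for $1 \le j \le k$. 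The conjectured $T^*$ corresponds to the sequence $(1, n-2k+1, 2, 2, \ldots, 2)$, placing the bulk of the vertices at depth $1$.

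The main creative step is to exhibit a better tree. For $d = 6$ (so $k = 3$), I would take $T$ to be the tree with depth sequence $(1, 2, n-5, 2)$: the centre $c$ has two children $u_1, u_2$; between them these have $n-5$ grandchildren; and on each side, one grandchild is given a further leaf child (to ensure the diameter is exactly $6$). Any such tree has eccentricity multiplicities $1, 2, n-5, 2$ at the values $3, 4, 5, 6$, respectively.

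Next, I would compare the two entropies via $I_{ecc}(G) = \log S - P/S$, where $S = \sum_v \ecc(v)$ and $P = \sum_v \ecc(v)\log \ecc(v)$. For $T^*$ one has $S_* = 4n+5$ and $P_* = 3\log 3 + 4\log 4 \cdot (n-5) + 10\log 5 + 12 \log 6$, while for $T$ one has $S = 5n-2$ and $P = 3 \log 3 + 8\log 4 + 5\log 5 \cdot (n-5) + 12 \log 6$. An asymptotic expansion gives $I_{ecc}(T^*) = \log n - c^*/n + O(n^{-2})$ and $I_{ecc}(T) = \log n - c/n + O(n^{-2})$ with $c < c^*$, so that $I_{ecc}(T) > I_{ecc}(T^*)$ for all sufficiently large $n$. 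For concreteness, at $n = 20$ one computes $I_{ecc}(T) \approx 4.309 > 4.303 \approx I_{ecc}(T^*)$, disproving the conjecture.

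The main obstacle is spotting the right alternative tree. The intuition is that maximising $I_{ecc}$ amounts to making the eccentricity-weighted probability distribution as uniform as possible, which in turn requires concentrating the vertex mass at a depth whose eccentricity is close to the mean eccentricity; this depth is not always $1$ as the conjectured $T^*$ would suggest. For $d \ge 6$ it is strictly better to push the bulk of the mass to a deeper level (e.g.\ depth $k-1$), and once the correct tree has been identified the remaining verification is routine.
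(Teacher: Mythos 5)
Your proposal is correct, and it reaches the disproof by a route that differs in execution from the paper's. The paper first proves a general characterization (Proposition~\ref{prop:tree_max_entropy_0.764d}) of the trees maximizing $I_{\ecc}$ among all trees of order $n$ and diameter $d$: by Remark~\ref{rem:fixedecc_onpath} the eccentricities on a diametral path are forced, and by Proposition~\ref{prop:fix_max_entropy_integers} the remaining $n-d-1$ vertices should all receive the integer eccentricity nearest (in the entropy sense) to the value $\b$ with $\log(\b)=\sum_j a_j\log(a_j)/\sum_j a_j$ over the forced values; the computation $\b\sim \frac{\sqrt[3]{2}}{\sqrt{e}}\,d\approx 0.764\,d$ then shows $\b>\bceil{d/2}+1$ for large $d$, so the conjectured tree is not extremal. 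You instead fix $d=6$ and exhibit a single explicit competitor, the tree with eccentricity multiplicities $1,2,n-5,2$ at the values $3,4,5,6$, and compare it directly to the conjectured tree, both asymptotically in $n$ and numerically at $n=20$. Your structural preliminaries (unique center for even diameter, $\ecc(v)=j+k$ for a vertex at depth $j$, realizability of any level-size sequence with $n_0=1$ and $n_j\ge 2$) are sound and play the role of Remark~\ref{rem:fixedecc_onpath}, and your numbers check out: $S_*=4n+5$, $S=5n-2$, and $I_{\ecc}(T)\approx 4.309>4.303\approx I_{\ecc}(T^*)$ at $n=20$, with the first-order coefficients being approximately $-0.25/n$ versus $-0.45/n$, confirming the asymptotic claim. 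Since the conjecture asserts extremality for both $I_{f_s}$ and $I_{f_e}$, refuting it for $I_{f_e}$ with $c_i=1$ suffices, exactly as in the paper's corollary. What your approach buys is a short, self-contained, fully explicit counterexample at a concrete small diameter; what the paper's approach buys is the identification of the actual maximizers for every $d$ together with the asymptotic constant $0.764$ — of which your chosen tree, with its bulk at eccentricity $5\approx 0.764\cdot 6$, is in fact the $d=6$ instance.
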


The proofs mainly rely on the general results in Section~\ref{sec:gen}.
Intuitively, the eccentricity sequence has to be as unbalanced or balanced as possible to attain the minimum or maximum value for the entropy. 

In Section~\ref{sec:asminIw}, we prove that the minimum of $I_w(G)$ among all graphs of order $n$ is of the form $\left(\frac 34 +o(1) \right) \log(n).$
Since the minimum of $I_{\ecc}(G)$ is  $(1-o(1))\log(n)$,
the Wiener-entropy has a better distinguishing character, i.e., the difference between the maximum and minimum is larger.

Finally, in Section~\ref{minIw_extrG_ft} we give some remarks on the trees and graphs that conjecturally attain the minimum Wiener-entropy. 
For this, we define the graph $G_{n,k,j}$ formally as follows.
Take the disjoint union of a path $P_k$ and clique $K_{n-k}$, and connect one end vertex of the path with $j$ vertices of the clique. An example of such a graph $G_{n,k,j}$ has been presented in Figure~\ref{fig:graphGnkj}.

\begin{figure}[ht]
\centering
\begin{tikzpicture}
\foreach \x in {0,45,...,315}{\draw[fill] (\x+22.5:1.2) circle (0.05);

\draw(\x+22.5:1.2) -- (\x-22.5:1.2);}

\foreach \x in {-90,-45,0,45}{\draw[fill] (\x+22.5:1.2) circle (0.05);
\draw(\x+22.5:1.2) -- (0:3);
}

\foreach \x in {0,45,90,135}{\draw[thick] (\x+22.5:1.2) -- (\x+202.5:1.2);}
\draw[thick] (22.5:1.2) -- (112.5:1.2) -- (202.5:1.2) -- (292.5:1.2) -- cycle;
\draw[thick] (67.5:1.2) -- (157.5:1.2) -- (247.5:1.2) -- (337.5:1.2) -- cycle;

\draw[thick] (22.5:1.2) -- (157.5:1.2) -- (292.5:1.2) -- (67.5:1.2) -- (202.5:1.2) -- (337.5:1.2) -- (112.5:1.2) -- (247.5:1.2) -- cycle;
\foreach \x in {3,...,8}{\draw[fill] (0:\x) circle (0.05);}
\draw[thick] (0:3) -- (0:8);
\end{tikzpicture}
\caption{The graph $G_{n,k,j}$ for $n=14, k=6$ and $j=4$}\label{fig:graphGnkj}
\end{figure}
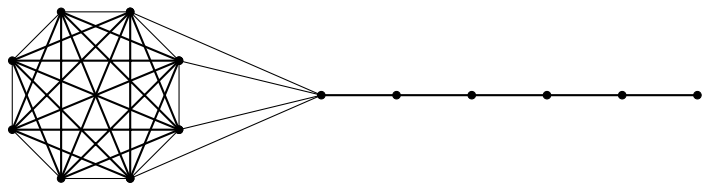

It turns out that already within the subclass of graphs of the form $G_{n,k,j}$, there is some surprising behaviour.
When $n$ ranges from $16$ to $100$, the value of $j$ fluctuates a lot, which intuitively can be explained as the clique (colex graph) at the end growing in a less discretized way.
Nevertheless, for $n$ large, it seems that $j=1$ is always true, i.e., we conjecture that $I_w$ for graphs of order $n$ is maximized by a graph of the form $G_{n,k,1}$ whenever $n$ is sufficiently large. 
Explaining this evolution in behaviour of the graphs $G_{n,k,j}$ seems to be already interesting.

\section{General results on entropy}\label{sec:gen}

The {\em normalized vector} (or {\em unit vector}) of a non-zero vector $\mathbf{a}=(a_1, a_2, \ldots, a_n)$, is a vector in the same direction with norm $1$. It is denoted by $\widehat{\mathbf{a}}$ and given by $\widehat{\mathbf{a}}=\frac{\mathbf{a}}{\abs{\mathbf{a}}_1}$, where $\abs{\mathbf{a}}_1=\sum_{i=1}^n \abs{a_i}$ is the $1$-norm of $\mathbf{a}$. 
By $\floorceil{\b}$ we denote a number that is either the greatest integer less than or equal to $\b$, $\bfloor \b$, or the smallest integer greater than or equal to $\b$, $\bceil \b$.

We also highlight the following two definitions.

\begin{defi}
Let $\mathbf{a}=(a_1,a_2,\ldots,a_n)$ be a positive real vector, and let $\mathbf{p}=\widehat{\mathbf{a}}=(p_1,p_2,\ldots,p_n)$. We define the entropy of $\mathbf{a}$ by
\begin{align*}
H(\mathbf{a})&=-(p_1,p_2,\ldots,p_n)(\log(p_1),\log(p_2),\ldots,\log(p_n))^{\mathrm{T}}\\
                 &=-\sum_{j=1}^{n} p_j\log(p_j).
\end{align*}
\end{defi}

\begin{defi}
    We say a sequence $\mathbf{a}=(a_i)_{1 \le i \le n}$ majorizes $\mathbf{b}=(b_i)_{1 \le i \le n}$
    if for every $1 \le k \le n,$ the sum of the $k$ largest (resp., smallest) elements of $\mathbf{a}$ is at least (resp., at most) the sum of the $k$ largest (resp., smallest) elements of $\mathbf{b}$, and equality does hold when $k=n.$
\end{defi}

The following theorem is a direct consequence of Karamata's inequality~\cite{Karamata32} applied on the (strictly) concave function $-x \log(x)$ (for $x \in [0,1]$).

\begin{thr}\label{thr:majorized_Hineq}
    Let $\mathbf{a}=(a_i)_{1 \le i \le n}$ and $\mathbf{b}=(b_i)_{1 \le i \le n}$ be two different sequences of positive reals such that $\mathbf{a}$ majorizes $\mathbf{b}.$
    Then  $$H(\mathbf{a})<H(\mathbf{b}).$$
\end{thr}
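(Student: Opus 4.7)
The plan is to reduce the statement to Karamata's inequality for the strictly concave function $f(x)=-x\log(x)$ on $[0,1]$. First, I would invoke the $k=n$ case of the majorization hypothesis to conclude that $\mathbf{a}$ and $\mathbf{b}$ share a common $1$-norm $S:=\sum_{i=1}^n a_i=\sum_{i=1}^n b_i$. Setting $p_i=a_i/S$ and $q_i=b_i/S$, the normalized vectors $\mathbf{p}=\widehat{\mathbf{a}}$ and $\mathbf{q}=\widehat{\mathbf{b}}$ are probability distributions, and since all partial-sum comparisons in the definition of majorization are invariant under multiplying both sequences by the same positive constant $1/S$, $\mathbf{p}$ majorizes $\mathbf{q}$, with all entries in $[0,1]$.

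The function $f(x)=-x\log(x)$ is strictly concave on $(0,1]$ because $f''(x)=-1/(x\ln 2)<0$. Karamata's inequality for concave functions, applied to the two sequences $(p_i)$ and $(q_i)$, then gives
\begin{align*}
H(\mathbf{a}) \;=\; \sum_{i=1}^{n} f(p_i) \;\le\; \sum_{i=1}^{n} f(q_i) \;=\; H(\mathbf{b}).
\end{align*}

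To upgrade this to strict inequality, I would argue that the multisets $\{\!\{a_i\}\!\}$ and $\{\!\{b_i\}\!\}$ must actually differ. Since entropy depends only on the multiset of probabilities, the hypothesis that $\mathbf{a}\ne\mathbf{b}$ is only useful under the (standard) reading that the sequences represent genuinely different multisets; otherwise the result is vacuous. Under that reading, after sorting in decreasing order the passage from $\mathbf{p}$ to $\mathbf{q}$ can be realized by a finite, \emph{nonempty} chain of Robin-Hood transfers of the form $(x_i,x_j)\mapsto(x_i-\varepsilon,x_j+\varepsilon)$ with $x_i>x_j$ and $0<\varepsilon\le (x_i-x_j)/2$. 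Strict concavity of $f$ forces each such transfer to strictly increase $\sum_i f(\cdot)$, whence the chain produces a strict increase overall.

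The main obstacle is not the non-strict inequality (which is essentially immediate from Karamata) but the bookkeeping for strictness: one must convert the abstract majorization gap into an explicit finite sequence of Robin-Hood steps, or alternatively cite a strengthened form of Karamata's inequality stating that strict concavity plus non-permutation implies strict inequality. I would favour the Robin-Hood argument, as it is self-contained, transparent, and re-usable for the graph applications later in the paper.
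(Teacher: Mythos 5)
Your proposal is correct and follows essentially the same route as the paper, which simply declares the theorem ``a direct consequence of Karamata's inequality applied on the (strictly) concave function $-x\log(x)$'' after normalizing; your Robin--Hood argument just makes the strictness explicit where the paper leaves it implicit. Your observation that ``different sequences'' must be read as ``different multisets'' (since a mere permutation would make the strict inequality false) is a fair and accurate caveat about the statement.
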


Bauer's maximum principle~\cite{Bauer58} applied on the concave function $H$ implies the following proposition (proof can be skipped by readers familiar with the principle).

\begin{prop}\label{prop:concavity_entropy}
    Let $S \subset (\mathbb R^+)^n$ be the convex hull of $N$ points.
    If $H(\mathbf{s})$ attains the minimum value among $\mathbf{s} \in S$, then $\mathbf{s}$ is one of the extremal points of the convex hull.
\end{prop}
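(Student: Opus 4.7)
The plan is to exploit the scale-invariance $H(\mathbf{a})=H(\widehat{\mathbf{a}})$ together with the concavity of Shannon entropy on the probability simplex. A subtle point worth flagging is that $H$, viewed as a function on $(\mathbb{R}^+)^n$, is \emph{not} concave in general (e.g.\ the Hessian at $\mathbf{a}=(1,2)$ can be computed to be indefinite), so the familiar principle ``a concave function on a polytope attains its minimum at an extreme point'' is not directly applicable here.

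Let $v_1,\ldots,v_N$ denote the extreme points of $S$ and fix any $\mathbf{s}\in S$. Write $\mathbf{s}=\sum_{i=1}^N \lambda_i v_i$ with $\lambda_i\ge 0$ and $\sum_i \lambda_i=1$. The key step is to observe that normalisation reweights the combination: if we set $\mu_i:=\lambda_i\abs{v_i}_1/\abs{\mathbf{s}}_1$, then $\mu_i\ge 0$, $\sum_i \mu_i=1$, and
\begin{align*}
\widehat{\mathbf{s}}\;=\;\frac{\sum_i\lambda_i v_i}{\abs{\mathbf{s}}_1}\;=\;\sum_i\mu_i\,\widehat{v_i}.
\end{align*}
So $(\mu_i)$ is a bona fide probability vector on $\{1,\dots,N\}$, expressing $\widehat{\mathbf{s}}$ as a convex combination of the $\widehat{v_i}$ on the simplex; this re-weighting is precisely what scale-invariance of $H$ allows.

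The rest is then a one-line application of the concavity of Shannon entropy on the simplex, combined with $H(\mathbf{s})=H(\widehat{\mathbf{s}})$ and $H(v_i)=H(\widehat{v_i})$:
\begin{align*}
H(\mathbf{s})\;=\;H(\widehat{\mathbf{s}})\;\ge\;\sum_i\mu_i H(\widehat{v_i})\;=\;\sum_i\mu_i H(v_i)\;\ge\;\min_i H(v_i).
\end{align*}
Hence the minimum of $H$ over $S$ is attained at one of the extreme points $v_i$, which is the desired conclusion. The main obstacle is resisting the initial temptation to apply concavity in $\mathbf{a}$ directly; once one realises that concavification must take place in the probability variable $\mathbf{p}=\widehat{\mathbf{a}}$ rather than in $\mathbf{a}$ itself, the reweighting formula $\mu_i=\lambda_i\abs{v_i}_1/\abs{\mathbf{s}}_1$ appears naturally, and the argument collapses to a single invocation of concavity of entropy on the simplex.
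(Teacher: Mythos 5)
Your proof is correct and rests on the same key mechanism as the paper's: since $\abs{\cdot}_1$ is linear on the positive orthant, $\ell_1$-normalization sends a convex combination in $(\mathbb R^+)^n$ to a convex combination on the probability simplex with re-weighted coefficients, after which concavity of Shannon entropy on the simplex applies. The difference is in how this is deployed. The paper argues locally and by contradiction: if $\mathbf{s}$ is not extremal, it picks a segment $\left[\mathbf{s-t},\mathbf{s+t}\right]\subset S$ through $\mathbf{s}$, writes $\widehat{\mathbf{s}}=\lambda\widehat{\mathbf{s-t}}+(1-\lambda)\widehat{\mathbf{s+t}}$, and invokes \emph{strict} concavity to get $\min\{H(\mathbf{s-t}),H(\mathbf{s+t})\}<H(\mathbf{s})$. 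You argue globally: decompose $\mathbf{s}$ over all $N$ extreme points and conclude $H(\mathbf{s})\ge\min_i H(v_i)$ from plain (non-strict) concavity. Your version buys a cleaner, direct proof that the minimum of $H$ over $S$ equals its minimum over the extreme points; the paper's version targets the literal wording that \emph{every} minimizer is extremal. Strictly speaking your argument only shows that \emph{some} extreme point attains the minimum, not that a minimizer must itself be extremal; but the two readings differ only in degenerate situations (e.g.\ when distinct extreme points are positive scalar multiples of one another, where the literal statement is false and the paper's strict inequality also breaks down because $\widehat{\mathbf{s-t}}=\widehat{\mathbf{s+t}}$ can occur), and the existence version is what is actually used downstream. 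No gap worth repairing.
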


\begin{proof}
    Assume this is not the case. So 
    $\mathbf{s}$ is not one of the extremal points of the convex hull.
    In that case there is a non-zero 
    vector $\mathbf{t}$ for which the interval $\left[\mathbf{s-t}, \mathbf{s+t}\right]$ is fully contained in $S$.
    Since $(\mathbf{s-t})+( \mathbf{s+t})= 2 \mathbf{s}$, we have
    $\abs{ \mathbf{s-t} }_1 \widehat{ \mathbf{s-t} }   + \abs{\mathbf{s+t} }_1 \widehat{ \mathbf{s+t} } = 2 \abs{\mathbf{s} }_1 \widehat{ \mathbf{s} }.$
    So $ \widehat{\mathbf{s}} = \lambda \widehat{\mathbf{s-t}}  + (1-\lambda) \widehat{\mathbf{s+t}},$ where $0<\lambda<1$.
    
    Since the entropy is strictly concave, this immediately implies that 
    $$\min\{H(\mathbf{s-t}), H(\mathbf{s+t})\} <H(\mathbf{s}).$$
    This leads to a contradiction with the choice of $\mathbf{s}.$
\end{proof}

\begin{prop}\label{prop:fix_max_entropy}
    Let $a_1,a_2, \ldots, a_s$ be fixed positive reals and $b_1,b_2, \ldots, b_t$ be positive variables.
    Let $\b$ be the solution for $\log(\b) = \frac{ \sum_{j=1}^s a_j \log(a_j) }{ \sum_{j=1}^s a_j }.$
    Then the entropy of (the normalized vector of) $(a_1,a_2, \ldots, a_s,b_1, \ldots, b_t)$ is maximized if and only if 
    $b_1=b_2=\cdots =b_t=\b.$
    Furthermore, $H\left((a_1,a_2, \ldots, a_s,\underbrace{b, \ldots, b}_{t }) \right)$ is increasing when $b<\b$ and decreasing when $b>\b.$
\end{prop}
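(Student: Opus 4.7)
The plan is to decouple the two directions of the claim by first reducing to the symmetric case $b_1 = \cdots = b_t = b$ and then carrying out a one-variable derivative calculation. Writing $A = \sum_{j=1}^s a_j$, $B = \sum_{j=1}^t b_j$, and $S_a = \sum_{j=1}^s a_j \log a_j$, the normalized entropy equals
$$H = \log(A+B) - \frac{S_a + \sum_{j=1}^t b_j \log b_j}{A+B}.$$
For any fixed value of $B$, the quantities $A+B$ and $S_a$ are fixed, so maximizing $H$ reduces to minimizing $\sum_{j=1}^t b_j \log b_j$ subject to $\sum_{j=1}^t b_j = B$. Since $x \mapsto x \log x$ is strictly convex on $(0,\infty)$, Jensen's inequality forces the minimum to occur uniquely at $b_1 = \cdots = b_t = B/t$. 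Hence any maximizer of $H$ must have all $b_j$'s equal.

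It thus suffices to maximize the one-variable function
$$g(b) \;:=\; H\bigl((a_1,\ldots,a_s,\underbrace{b,\ldots,b}_{t})\bigr) \;=\; \log(A+tb) - \frac{S_a + tb \log b}{A+tb}$$
over $b>0$. A direct differentiation (where the $\tfrac{1}{\ln 2}$ factors arising from $\log(A+tb)$ and $tb \log b$ cancel cleanly) simplifies to
$$g'(b) = \frac{tA}{(A+tb)^2}\bigl(\log \beta - \log b\bigr),$$
using the defining identity $\log \beta = S_a/A$. Since the prefactor is strictly positive, $g'(b) > 0$ for $b<\beta$, $g'(\beta)=0$, and $g'(b)<0$ for $b>\beta$. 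This simultaneously yields both parts of the statement: $g$ attains its unique maximum at $b=\beta$, which combined with the reduction above shows that $H$ is globally maximized iff $b_1 = \cdots = b_t = \beta$, and the monotonicity claim is read off directly from the sign of $g'$.

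The main obstacle is essentially bookkeeping in the derivative computation: one must track the base-$2$ logarithm factors through both terms of $g$ to see the cancellation that produces $\log \beta - \log b$. Conceptually there is nothing deeper, since once the Jensen step collapses the problem to one variable, the critical-point equation $A \log b = S_a$ is forced, giving $b = \beta$ immediately.
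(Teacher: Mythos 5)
Your proposal is correct and follows essentially the same route as the paper: first reduce to $b_1=\cdots=b_t$ by convexity of $x\log x$ with $\sum b_j$ fixed, then differentiate the resulting one-variable function and read off the sign of $g'(b)$ from $\log\beta-\log b$. Your final expression for the derivative even carries the factor $t$ that the paper's last displayed line drops (harmlessly, since it does not affect the sign), so nothing further is needed.
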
 

\begin{proof}
    First note that due to concavity of $f(x)=-x \log(x),$ once $\sum b_i $ is fixed, we know that the maximum occurs when all $b_i$ are equal to a value $b$.
    In that case $$H((a_1,a_2, \ldots, a_s,b_1, \ldots, b_t)) = \log\left(\sum a_i + bt\right) - \frac{ \sum a_i \log(a_i) + tb \log(b)}{\sum a_i + bt}.$$
    The maximum can be found by taking the derivative towards $b$ and setting this equal to zero:

   \begin{align*}
       \frac{d}{db}H\left((a_1,a_2, \ldots, a_s,\underbrace{b, \ldots, b}_{t })\right) &=
       \frac{t}{ \sum a_i + bt}  - \frac{ t(1+ \log(b) )}{\sum a_i + bt} + t \frac{ \sum a_i \log(a_i) + tb \log(b)}{(\sum a_i + bt)^2}\\
       &= \frac{  \sum a_i \log(a_i) -  \log(b)\sum a_i}{(\sum a_i + bt)^2}.
   \end{align*}
    
    So this is zero when $\log(b) = \frac{ \sum_{j=1}^s a_j \log(a_j) }{ \sum_{j=1}^s a_j },$ i.e., $b=\b$, and it is positive respectively negative if $b$ is smaller or larger,
    implying it indeed attains a maximum for this choice.
\end{proof}

\begin{prop}\label{prop:fix_max_entropy_integers}
    Let $a_1,a_2, \ldots, a_s>1$ be fixed reals and $b_1,b_2, \ldots, b_t$ be variables that are positive integers.
    Let $\b$ be the real solution for the equality $\log(\b) = \frac{ \sum_{j=1}^s a_j \log(a_j) }{ \sum_{j=1}^s a_j }.$
    Then the entropy of $(a_1,a_2, \ldots, a_s,b_1, \ldots, b_t)$ is maximized for $b_1=b_2=\cdots=b_t=b$, where $b$ is the optimal choice in $\{ \bfloor{\b}, \bceil{\b} \}.$
\end{prop}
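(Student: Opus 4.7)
The plan is to reduce any putative optimum to an all-equal configuration in two steps and then apply Proposition~\ref{prop:fix_max_entropy} to pin down the common value. Throughout, set $A = \sum_{j\le s} a_j$ and $C = \sum_{j\le s} a_j \log a_j$.

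First, Theorem~\ref{thr:majorized_Hineq} provides a Robin-Hood step: if some $b_j - b_i \geq 2$, the local swap $(b_i, b_j) \mapsto (b_i+1, b_j-1)$ produces a multiset majorized by the original, so the entropy strictly increases. Hence at the optimum the $b_i$ take values in a set $\{k, k+1\}$ for some positive integer $k$. Let $r \in \{0, 1, \ldots, t\}$ be the number of $b_i$ equal to $k+1$, and define (extending to real $r \in [0, t]$)
\[
\phi(r) = \log(A+tk+r) - \frac{C + tk \log k + r\delta}{A+tk+r}, \qquad \delta := (k+1)\log(k+1) - k\log k.
\]
The key observation is that a direct differentiation shows the numerator of $\phi'(r)$ is an affine, strictly increasing function of $r$; the $r$-dependent terms coming from the two summands cancel up to a linear remainder. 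Hence $\phi'$ has at most one zero on $(0,t)$, and at any such zero it changes sign from $-$ to $+$, so the unique critical point of $\phi$ is a minimum. Consequently $\phi$ attains its maximum on $[0,t]$ at an endpoint, meaning all $b_i$ coincide at the optimum.

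With this reduction, the statement follows from Proposition~\ref{prop:fix_max_entropy}: the real-valued function $b \mapsto H(a_1,\ldots,a_s,b,\ldots,b)$ is unimodal with peak $\beta$, and $a_j > 1$ for each $j$ ensures $\log\beta = C/A > 0$, whence $\lfloor\beta\rfloor \ge 1$. The integer maximum is therefore attained at $\lfloor\beta\rfloor$ or $\lceil\beta\rceil$, whichever yields the larger entropy. The main subtlety is the algebraic cancellation that makes the numerator of $\phi'(r)$ linear in $r$: a naive attempt to prove $\phi$ is convex on $[0,t]$ fails in general, but this one-sign-change observation suffices to force the maximum of $\phi$ on $[0,t]$ to the endpoints.
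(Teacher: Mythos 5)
Your proof is correct, but it reaches the conclusion by a genuinely different route than the paper, and the comparison is instructive. For the first reduction, you use Theorem~\ref{thr:majorized_Hineq} and a Robin--Hood transfer to force the $b_i$ into two consecutive values $\{k,k+1\}$ without yet knowing what $k$ is; the paper instead invokes the monotonicity statement of Proposition~\ref{prop:fix_max_entropy} to push every $b_i$ directly into $[\lfloor\beta\rfloor,\lceil\beta\rceil]$, which is why it never needs your final unimodality step identifying the common value. For the second reduction (ruling out a genuine mixture of $k$'s and $(k+1)$'s), the paper introduces auxiliary weights $c_1,c_2$ that reproduce the sum and the $\sum x\log x$ of the fixed part, thereby reducing to a pure two-value entropy function $h(c)$, and proves $h$ strictly convex via a second-derivative computation (Claim~\ref{claim_convex_function}, using $2b\ln(1+1/b)>1$ for $b\ge 1$). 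You instead keep the fixed contribution $(A,C)$ explicit in $\phi(r)$ and observe that the numerator of $\phi'(r)$ is affine in $r$ with slope $1/\ln 2>0$, so $\phi'$ changes sign at most once, from negative to positive, forcing the maximum to an endpoint. Your side remark is accurate and worth emphasizing: $\phi$ itself need not be convex when $C$ is large relative to $A$, which is exactly the obstruction the paper's $c_1,c_2$ substitution is designed to remove; your first-derivative sign analysis sidesteps it entirely and is arguably the lighter computation. Your closing observation that $a_j>1$ guarantees $\beta>1$, so $\lfloor\beta\rfloor$ is a legitimate positive integer, is a small point of rigor the paper leaves implicit. The one thing both arguments quietly assume is that a maximizing integer tuple exists; this is harmless since the entropy tends to $0$ as any $b_i\to\infty$, but a sentence acknowledging it would not hurt.
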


\begin{proof}
Let $(b_i)_{1 \le i \le t}$ be positive integers maximizing $H((a_1,a_2, \ldots, a_s,b_1, \ldots, b_t)).$
As a first step, we prove that all $b_i$ are equal to $\floorceil{\b}.$
Assume, to the contrary, that $b_1 \le b_2 \le \cdots \le b_t$ (that is, the sequence is ordered) and $\max\{b_{u-1},  \bceil{\b}\} < b_u=b_{u+1}= \cdots =b_t$.
(The analogous case, where some of the numbers are smaller than $\bfloor{\b}$, follows similarly.)

Now applying Proposition \ref{prop:fix_max_entropy} with fixed reals $(a_1,a_2, \ldots, a_s,b_1, \ldots, b_{u-1})$ and variables $(t-u+1)$ times a variable $b$, we conclude that 
$H\left((a_1,a_2, \ldots, a_s,b_1, \ldots, b_{u-1},\underbrace{b, \ldots, b}_{t-u+1 } )\right)$ is decreasing when $b>\b'.$
Here $\b'\le \max\{\b, b_{u-1}\}\le b_u-1.$
Hence decreasing $b_u=b_{u+1}= \cdots =b_t$ by one increases the entropy. This contradiction implies the result.
Alternatively, one can repeat the above argument of increasing the entropy by decreasing all the occurrences of the largest value by one if it is greater than $\bceil{\b}$. This implies that $\max\{b_i\}\le \bceil{\b}$ for an optimal choice of $(b_i)_{1 \le i \le t}.$
Analogously an optimal sequence satisfies $\min\{b_i\}\ge \bfloor{\b}.$

In the second step, we prove that all $b_i$ are equal.
Assume this is not the case and $t_1$ of the $b_i$ equal $b_1=\bfloor{\b}$ and $t_2=t-t_1$ equal (possible after renaming the index) $\bceil{\b}=b_2.$
Consider $(a_1,a_2,\ldots, a_{s},b_3,\ldots,b_{t})$ as fixed values and $b_1$ and $b_2$ as the variable ones.
Now there do exist positive reals $c_1, c_2$ for which
\begin{align*}
    c_1 b_1 + c_2 b_2 &= \sum_{i=1}^{s} a_i + \sum_{i=3}^{t} b_i \\
    c_1 b_1\log(b_1) + c_2 b_2\log(b_2) &= \sum_{i=1}^{s} a_i \log(a_i) + \sum_{i=3}^{t} b_i  \log(b_i).
\end{align*}

We will need the following claim in the remaining of the proof.
\begin{claim}\label{claim_convex_function}
    For fixed positive reals $n$ and $b\ge 1$, consider the function 
    $$ h(c)= \log( nb+c) - \frac{ c(b+1)\log(b+1) +(n-c) b\log(b)}{nb+c}.$$
    Then $h(c)$ is a strictly convex function on $[0,n].$
\end{claim}

\begin{claimproof}
Since $h(c)$ is a function that is twice continuously differentiable on $[0,n]$, to prove $h(c)$ is strictly convex, it is sufficient to prove its second derivative is positive for all $c$ in $[0,n]$. By calculating, its second derivative is $\frac{2b(b+1)n(\ln(b+1)-\ln(b))-bn-c}{(bn+c)^3\ln(2)}.$ 
This second derivative is positive since $bn+c \le (b+1)n$ and $2b(\ln(b+1)-\ln(b))>1$ for every $ b \ge 1$.
\end{claimproof}

We apply Claim~\ref{claim_convex_function} with fixed $b=\bfloor{\b}$ and $n=c_1+c_2+2$. Let $c=c_2+1.$ Then due to convexity of $h$, $h(c)<\max\{h(c-1),h(c+1)\}.$
But this is exactly telling that changing $b_1$ into $\bceil{\b}$ or $b_2$ into $\bfloor{\b}$ will increase the entropy $H((a_1,a_2, \ldots, a_s,b_1, \ldots, b_t)).$
This contradicts the choice of the sequence having the maximum entropy and thus we can conclude that all $b_i$ are equal. 
\end{proof}

\begin{defi}
    Let $H^n((a_1,a_2, \ldots, a_s))=H\left((a_1,a_2, \ldots, a_s,\underbrace{b, \ldots, b}_{n-s} )\right)$ where $b$ is chosen such that $\log(b) = \frac{ \sum_{j=1}^s a_j \log(a_j) }{ \sum_{j=1}^s a_j }.$
\end{defi}

\begin{lem}\label{lem:log(n-r)}
    $H^n((a_1,a_2, \ldots, a_s))= \log(n-r)$ where $r= s- \frac{ \sum_{j=1}^s a_j}{b}.$
\end{lem}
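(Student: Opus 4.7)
The plan is a direct computation: expand the entropy using its definition, then use the defining equation of $b$ to collapse the logarithmic terms. Set $A = \sum_{j=1}^s a_j$ and let $T = A + (n-s)b$ denote the total sum of the sequence $(a_1,a_2,\ldots,a_s,\underbrace{b,\ldots,b}_{n-s})$. Writing out $H^n((a_1,\ldots,a_s))$ via the definition of $H$ on a positive vector yields
$$H^n((a_1,\ldots,a_s)) = \log(T) - \frac{1}{T}\left(\sum_{j=1}^{s} a_j \log(a_j) + (n-s)\, b \log(b)\right).$$

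Now I would invoke the defining relation $\log(b) = \frac{\sum_{j=1}^s a_j \log(a_j)}{\sum_{j=1}^s a_j}$, which rewrites as $\sum_{j=1}^s a_j \log(a_j) = A \log(b)$. Substituting this into the bracket gives $A \log(b) + (n-s)b\log(b) = (A+(n-s)b)\log(b) = T\log(b)$, so
$$H^n((a_1,\ldots,a_s)) = \log(T) - \log(b) = \log\!\left(\tfrac{T}{b}\right).$$

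It remains to identify $T/b$ with $n-r$. Since $T/b = A/b + (n-s)$ and $r = s - A/b$ gives $A/b = s - r$, we obtain $T/b = (s-r) + (n-s) = n-r$, which finishes the proof. There is no real obstacle here: the entire statement is an algebraic consequence of the way $b$ was chosen in Proposition~\ref{prop:fix_max_entropy}, the key observation being precisely that this choice is exactly what is needed to make the weighted logarithm sum collapse into a single $\log(b)$ factor.
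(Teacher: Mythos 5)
Your computation is correct and is essentially identical to the paper's own proof: both expand $H$ of the padded sequence, use the defining relation for $b$ to collapse the weighted logarithm sum to $\log(b)$, and then rewrite $\log(T)-\log(b)=\log(n-s+\sum_j a_j/b)=\log(n-r)$. No issues.
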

    
\begin{proof}
    Note that 
    \begin{align*}
        H\left((a_1,a_2, \ldots, a_s,\underbrace{b, \ldots, b}_{n-s}) \right)&=
        \log\left( \sum_{j=1}^s a_j + (n-s) b\right) - \frac{ \sum_{j=1}^s a_j \log(a_j) + (n-s) b \log(b)}{\sum_{j=1}^s a_j + (n-s) b}\\
        &= \log\left( \sum_{j=1}^s a_j + (n-s) b\right) - \log(b)\\
        &= \log\left(n-s+\frac{\sum_{j=1}^s a_j}{b} \right).
    \end{align*}
\end{proof}

\begin{prop}\label{prop:majorizesHineq}
    Let $\mathbf{a}=(a_i)_{1 \le i \le s}$ and $\mathbf{c}=(c_i)_{1 \le i \le s}$ be two sequences of positive reals such that $\mathbf{a}$ majorizes $\mathbf{c}.$
    Then for every $n \ge s,$ $$H^n(\mathbf{a})\le H^n(\mathbf{c}).$$
\end{prop}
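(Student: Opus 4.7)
The plan is to reduce the inequality to a direct comparison of the two quantities $b_a$ and $b_c$ associated with $\mathbf{a}$ and $\mathbf{c}$ via Lemma~\ref{lem:log(n-r)}, and then to obtain that comparison by applying Karamata's inequality to a convex function. Concretely, set $b_a$ and $b_c$ to be the values defined by $\log(b_a)= \frac{\sum_{j=1}^s a_j \log(a_j)}{\sum_{j=1}^s a_j}$ and $\log(b_c)= \frac{\sum_{j=1}^s c_j \log(c_j)}{\sum_{j=1}^s c_j}$, and denote the common sum $T=\sum_{j=1}^s a_j = \sum_{j=1}^s c_j$ (the equality holds because $\mathbf{a}$ majorizes $\mathbf{c}$, hence in particular $k=n$ in the majorization definition forces the total sums to agree).

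By Lemma~\ref{lem:log(n-r)}, $H^n(\mathbf{a})=\log(n-r_a)$ with $r_a=s-T/b_a$, and similarly $H^n(\mathbf{c})=\log(n-r_c)$ with $r_c=s-T/b_c$. The monotonicity of $\log$ reduces the desired inequality $H^n(\mathbf{a})\le H^n(\mathbf{c})$ to $b_a\ge b_c$, which in turn is equivalent to
$$\sum_{j=1}^s a_j\log(a_j) \ge \sum_{j=1}^s c_j\log(c_j),$$
since the denominators in the defining formulas for $\log(b_a)$ and $\log(b_c)$ are both equal to $T$.

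The key step is therefore to deduce this last inequality. Here I would invoke Karamata's inequality applied to the strictly convex function $f(x)=x\log(x)$ on $(0,\infty)$: since $\mathbf{a}$ majorizes $\mathbf{c}$, we obtain $\sum f(a_j)\ge \sum f(c_j)$, which is exactly what we need. (This is the counterpart of Theorem~\ref{thr:majorized_Hineq}, which used concavity of $-x\log x$ on $[0,1]$; here we are using convexity of $x\log x$ on the positive reals, so the inequality goes the other way.) Chaining everything, one gets $b_a\ge b_c$, hence $r_a\ge r_c$, and finally $H^n(\mathbf{a})=\log(n-r_a)\le\log(n-r_c)=H^n(\mathbf{c})$.

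The only mild subtlety I foresee is checking that the arguments of the logarithms are positive, i.e., that $n-r_a,n-r_c>0$, so that the monotonicity step is valid. This follows from Jensen applied to the convex function $x\log x$: the weighted average defining $\log(b_a)$ is at least $\log(T/s)$, giving $b_a\ge T/s$ and thus $r_a\le s\le n$, with strict inequality $r_a<s$ whenever there is at least one $b$-slot or the $a_j$ are non-degenerate, and in the boundary case $n=s$ one simply has $H^n(\mathbf{a})=H(\mathbf{a})$ well-defined. Apart from this bookkeeping, the argument is essentially a one-line Karamata application followed by tracking it through the formula of Lemma~\ref{lem:log(n-r)}.
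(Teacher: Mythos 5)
Your argument is correct, but it takes a genuinely different route from the paper's. The paper proves the claim as a chain of two inequalities on the padded length-$n$ sequences: first $H((\mathbf{a},b,\ldots,b))\le H((\mathbf{c},b,\ldots,b))$ by Theorem~\ref{thr:majorized_Hineq} (Karamata for the concave function $-x\log(x)$, using implicitly that appending the same block $b,\ldots,b$ to both sequences preserves majorization), and then $H((\mathbf{c},b,\ldots,b))\le H((\mathbf{c},b',\ldots,b'))$ by Proposition~\ref{prop:fix_max_entropy}, since $b'$ is the optimal filling value for $\mathbf{c}$. You instead push everything through the closed form of Lemma~\ref{lem:log(n-r)}, reducing the statement to $b_a\ge b_c$, i.e.\ to $\sum_j a_j\log(a_j)\ge\sum_j c_j\log(c_j)$, which is a single application of Karamata to the convex function $x\log(x)$ on the original length-$s$ sequences (legitimate because majorization forces the two totals to agree). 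Your route is more self-contained --- it needs neither Proposition~\ref{prop:fix_max_entropy} nor the padding-preserves-majorization fact --- whereas the paper's chain separates the two conceptual steps (redistributing mass among the fixed entries versus re-optimizing the free entries) and reuses machinery it has already set up. One small remark: your worry about positivity of the logarithm's argument is vacuous, since $n-r_a=n-s+T/b_a>0$ follows immediately from $n\ge s$ and $T,b_a>0$; the Jensen detour (which in any case bounds $r_a$ from below by $0$ rather than from above by $s$) is not needed.
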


\begin{proof}
    Let $\log(b) = \frac{ \sum_{j=1}^s a_j \log(a_j) }{ \sum_{j=1}^s a_j }$ and $\log(b') = \frac{ \sum_{j=1}^s c_j \log(c_j) }{ \sum_{j=1}^s c_j }.$
    By Theorem~\ref{thr:majorized_Hineq} and Proposition~\ref{prop:fix_max_entropy} (with $b'$ as maximizer) respectively, $$H\left((a_1,a_2, \ldots, a_s,\underbrace{b, \ldots, b}_{n-s} )\right) \le H\left((c_1,c_2, \ldots, c_s,\underbrace{b, \ldots, b}_{n-s}) \right) \le H\left((c_1,c_2, \ldots, c_s,\underbrace{b', \ldots, b'}_{n-s} )\right).$$
\end{proof}

\begin{remark}\label{rem:H^n_larger_subset}
    Observe that by definition, if $\mathbf{c}$ is a subsequence of $\mathbf{a},$ then $H^n(\mathbf{a})\le H^n(\mathbf{c}).$
\end{remark}

\section{On eccentricity-entropy}\label{sec:Iecc}

This section is devoted to the three conjectures mentioned in Section~\ref{sec:int}. We start proving Conjecture~\ref{conj_graph_min_diam2}.

\begin{thr}
    The minimum value for $I_{\ecc}$ among all graphs of order $n$ is attained by graphs of diameter $2$.
\end{thr}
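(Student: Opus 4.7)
The plan is to exploit the scale invariance of $H$ together with the inequality $\diam(G) \le 2 \rad(G)$ (a consequence of the triangle inequality) to reduce the minimization of $I_{\ecc}(G)$ to a minimization of $H$ on the hypercube $[1,2]^n$. Since $H$ is scale-invariant, $I_{\ecc}(G) = H(\mathbf{e}(G)/\rad(G))$, and every coordinate of $\mathbf{e}(G)/\rad(G)$ lies in $[1,\diam(G)/\rad(G)] \subseteq [1,2]$. Hence for every graph $G$ of order $n$,
\[
I_{\ecc}(G) \;\ge\; \min_{\mathbf{v}\in[1,2]^n} H(\mathbf{v}).
\]

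Next, I would invoke Proposition~\ref{prop:concavity_entropy} applied to $[1,2]^n$, which is the convex hull of its $2^n$ vertices in $\{1,2\}^n$, to conclude that the minimum above is attained at one of those vertices. Up to permutation (which leaves $H$ invariant), such a vertex has the form
$\mathbf{v}_k = (\underbrace{1,\ldots,1}_{k}, \underbrace{2,\ldots,2}_{n-k})$ for some $k \in \{0,1,\ldots,n\}$, and a direct computation yields the closed form
\[
g(k) \;:=\; H(\mathbf{v}_k) \;=\; \log(2n-k) - \frac{2(n-k)}{2n-k}.
\]

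The final step is realizability. For each $k \in \{0,1,\ldots,n-2\}$, the vector $\mathbf{v}_k$ is the eccentricity sequence of the diameter 2 graph obtained by joining $k$ universal vertices to $n-k$ pairwise non-adjacent vertices, so it remains only to verify that the integer minimizer of $g$ lies in this realizable range $\{0,\ldots,n-2\}$. For $n \ge 4$, this reduces to the boundary comparisons $g(n-1) > g(n-2)$ and $g(n) > g(n-2)$, both of which follow from a direct evaluation of the explicit one-variable function $g$. The cases $n \le 3$ are trivial, since every connected graph on at most three vertices has diameter at most two.

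The main obstacle is exactly this boundary check. A priori the lower bound produced by Proposition~\ref{prop:concavity_entropy} could in principle be attained only at the unrealizable configurations $k=n-1$ (where the lone non-universal vertex would itself be forced to be universal) or $k=n$ (which is $K_n$, of diameter one); these corners have to be excluded by direct evaluation of $g$ before one can claim that the lower bound is matched by a genuine diameter 2 graph. Everything else is a short application of the general entropy machinery built up in Section~\ref{sec:gen}.
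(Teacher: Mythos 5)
Your argument is correct and is essentially the paper's own proof: both reduce, via scale invariance and Proposition~\ref{prop:concavity_entropy} applied to the hypercube $[r,2r]^n$ (equivalently $[1,2]^n$), to the two-valued corners $(1,\dots,1,2,\dots,2)$ and then realize the optimal corner by a radius-one graph of diameter $2$. The only real difference is bookkeeping: you exclude the unrealizable corners $k=n-1$ and $k=n$ by comparing values of the explicit function $g$, whereas the paper rules out radius $r>1$ by observing that a mix of eccentricities $r$ and $2r$ forces a vertex of eccentricity $2r-1$, and it defers the discussion of the optimal $k$ (and the fact that it lies in the realizable range) to the remark following the theorem.
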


\begin{proof}
Let $G$ be a graph attaining the minimum value for $I_{\ecc}(G)$ among all graphs of order $n$. Let $r$ be the radius of the graph and let $S= [r,2r]^n.$ The sequence of eccentricities $\left(\ecc(v) \right)_{v \in V}$ belongs to $S$ and by Proposition~\ref{prop:concavity_entropy} 
every eccentricity is $r$ or $2r$ if it attains the minimum in $S$.
If all eccentricities are equal, then the entropy equals $\log(n)$ and so it would be the maximum instead of minimum.
If $r>1$, then if there are vertices with eccentricity $r$, as well as with $2r$, there is also a vertex with eccentricity $2r-1.$
When $r=1$, equality can clearly be attained and since the normalization made the precise value of $r$ being unimportant, we conclude that the graphs with radius $1$ indeed attain the minimum.
\end{proof}

\begin{remark}
    A vertex has eccentricity $1$ if and only if it has degree $n-1.$
    The entropy is completely determined once the number $k$ of vertices of degree $n-1$ is known. The other $n-k$ vertices have eccentricity $2.$
    The entropy of the graph equals $\log(2n-k)-\frac{2(n-k)}{2n-k}.$
    This is a concave function for $0 \le k \le n$, with the minimum being attained by $k=(2-2\ln(2)) n$ (over the reals, and so $k $ will be of the form $\floorceil {(2-2\ln(2)) n}$). This minimum is roughly $\log(n)-0.086=(1-o(1))\log_2(n).$
    As such, for $n \ge 10$, we immediately have that $k > \frac n2,$ so together with the verification for small $n$, this addresses Conjecture \ref{conj_graph_min_diam2} 
    completely.
    Since there is a set $S$ of $s=\floorceil{(2\ln(2)-1) n}$ vertices with eccentricity $2$, the complement of $G[S]$ needs to have degree at least $1$.
    In particular, taking $G[S]$ to be the empty graph, we observe that there are extremal graphs with at least $ \binom{\bfloor{(2\ln(2)-1) n}}{2}$ non-edges. It also implies that there are many minimal graphs, $2^{\Theta(n^2)}$ (roughly the number of non-isomorphic connected graphs on $s$ vertices), contrasting the ideas of~\cite{KDM13}.
\end{remark}

When restricting to the class of trees, we will observe that the star (the only tree with diameter $2$) is not the graph minimizing the eccentricity-entropy, mainly due to the reason that there is no possibility to play with the ratio of eccentricities with values $1$ and $2$. Actually the trees minimizing $I_{\ecc}$ will be caterpillars containing a central path with many pendent leaves attached to both the central vertices and the end vertices, such that most eccentricities are $r+1$ and $2r$, for some value of $r(n)$. 
On the other hand, the star has the third largest possible value for $I_{\ecc}$ among all trees of order $n$. This will be verified by deriving the three largest possible values of the eccentricity-entropy for the class of trees, and as such we also confirm Conjecture~\ref{conj_tree_max_diam3}.
To do so, we first observe that the eccentricities of vertices on the diameter (the path between two furthest vertices) are fixed.

\begin{remark}\label{rem:fixedecc_onpath}
    Let $P$ be a diametrical path of length $d$ in a tree. If $v$ is a vertex on the path at distance $i$ from one end vertex of $P$, then $\ecc(v)=\max\{i,d-i\}$. Otherwise, there exists a path of length greater than $d$.
    In a tree, there are only one (for even diameter) or two (for odd diameter) vertices whose eccentricity equals the radius $r.$
\end{remark}

\begin{lem}\label{lem:Iecc(Sn)}
    For a star $S_n$, $I_{\ecc}(S_n)>\log\left(n-\frac {1-\ln(2)}{2}\right).$
\end{lem}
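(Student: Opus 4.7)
The plan is to compute $I_{\ecc}(S_n)$ in closed form and then reduce the claimed bound to the standard inequality $\ln(1+x) < x$ for $x > 0$.

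First I would observe that the star $S_n$ has a unique center vertex of eccentricity $1$ and $n-1$ leaves of eccentricity $2$, so $\sum_{v} \ecc(v) = 2n-1$. Plugging this into the definition,
\begin{align*}
I_{\ecc}(S_n) &= -\frac{1}{2n-1}\log\frac{1}{2n-1} - (n-1)\cdot \frac{2}{2n-1}\log\frac{2}{2n-1} \\
&= \frac{1}{2n-1}\log(2n-1) + \frac{2(n-1)}{2n-1}\bigl(\log(2n-1)-1\bigr) \\
&= \log(2n-1) - 1 + \frac{1}{2n-1}.
\end{align*}

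Next, I would massage the target bound into a comparable form. Using $\log(2n-1)-1 = \log\frac{2n-1}{2}$ and rewriting $n - \frac{1-\ln 2}{2} = \frac{2n-1+\ln 2}{2}$, the inequality $I_{\ecc}(S_n) > \log\left(n - \frac{1-\ln 2}{2}\right)$ becomes
$$\frac{1}{2n-1} > \log\frac{2n-1+\ln 2}{2n-1} = \log\left(1 + \frac{\ln 2}{2n-1}\right).$$

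Finally, setting $m = 2n-1$ and passing to the natural logarithm via $\log(1+x) = \ln(1+x)/\ln 2$, it suffices to show $\ln\!\left(1 + \frac{\ln 2}{m}\right) < \frac{\ln 2}{m}$, which is immediate from the well-known inequality $\ln(1+x) < x$ for every $x > 0$, applied with $x = \ln(2)/m > 0$. There is no real obstacle here; the only observation needed is the natural substitution $m = 2n-1$ after absorbing the constant $-1$ inside the logarithm, which converts the target into a one-line application of the standard $\ln(1+x) < x$ bound.
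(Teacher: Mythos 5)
Your proof is correct and follows essentially the same route as the paper: both first reduce to the closed form $I_{\ecc}(S_n)=\log\left(n-\tfrac12\right)+\tfrac{1/2}{n-1/2}$ and then show that the correction term $\tfrac{1}{2n-1}$ exceeds the logarithmic gap. The paper phrases this last step as the integral estimate $\frac{1}{\ln(2)}\int_{n-1/2}^{\,n-(1-\ln(2))/2}\frac{1}{x}\,\mathrm{d}x\le\frac{1/2}{n-1/2}$, which is exactly your inequality $\ln(1+x)<x$ in integral form.
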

\begin{proof}
    Observe that 
    $\frac{d}{dx} \log(x) = \frac{1}{ \ln(2) x}$ and thus
\begin{equation*}
    I_{\ecc}(S_n)=\log\left(n-\frac 12\right) + \frac{\frac 12 }{n-\frac 12}\ge 
        \log\left(n-\frac 12\right) + \frac1{\ln(2)} \int_{n-\frac 12}^{n-\frac {1-\ln(2)}{2} } 
        \frac{1}{x} \d x= \log\left(n-\frac {1-\ln(2)}{2}\right)
\end{equation*} 
\end{proof}

\begin{prop}\label{prop:over6_less_star}
If $T$ is a graph of order $n$ and diameter $d \ge 6$, then $I_{\ecc}(T)<I_{\ecc}(S_n).$ 
\end{prop}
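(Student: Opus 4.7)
The plan is to upper bound $I_{\ecc}(T)$ by a quantity depending only on $n$ and $d$, using the rigid eccentricity structure on a diametrical path (so $T$ must be read as a tree; for general graphs the statement fails, e.g.\ $C_{13}$ has diameter $6$ yet $I_{\ecc}(C_{13})=\log(13)>I_{\ecc}(S_{13})$), and then to compare with the sharp lower bound $I_{\ecc}(S_n)>\log(n-(1-\ln(2))/2)$ from Lemma~\ref{lem:Iecc(Sn)}.

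First, I would invoke Remark~\ref{rem:fixedecc_onpath} to record that a diametrical path contributes the fixed multiset $\mathbf{a}_d=(\max(i,d-i))_{0\le i\le d}$ of size $s=d+1$, while the remaining $n-d-1$ eccentricities necessarily lie in $[\lceil d/2\rceil, d]$. Treating the latter as free positive variables, Proposition~\ref{prop:fix_max_entropy} yields
\[ I_{\ecc}(T)\le H^n(\mathbf{a}_d), \]
and Lemma~\ref{lem:log(n-r)} rewrites the right-hand side as $\log(n-r_d)$ where
\[ r_d=(d+1)-\frac{\sum_j a_j}{\b_d}, \qquad \log\b_d=\frac{\sum_j a_j\log(a_j)}{\sum_j a_j}. \]
Combining with Lemma~\ref{lem:Iecc(Sn)} reduces the proposition to the purely numerical claim
\[ r_d>\frac{1-\ln(2)}{2}\approx 0.153 \qquad\text{for every } d\ge 6. \]

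The main obstacle is precisely this last inequality, because the bound is tight for the smallest admissible $d$: direct computation gives $r_6\approx 0.172$ and $r_7\approx 0.165$, only narrowly above the threshold (indeed $r_5\approx 0.126$ falls below, so the hypothesis $d\ge 6$ is sharp for this approach). My plan is therefore to verify the inequality by explicit computation for a finite initial range $d\in\{6,7,\ldots,d_0\}$, and for $d>d_0$ to handle the tail asymptotically: replacing the sums defining $\b_d$ by integrals over $[d/2,d]$ yields $\log\b_d=\log(d)+\tfrac13-\tfrac1{2\ln 2}+o(1)$, so that $\b_d\sim d\cdot 2^{1/3}/\sqrt{e}$, and since $\sum_j a_j=\tfrac34 d^2+\Theta(d)$, one obtains $r_d=\Omega(d)$ as $d\to\infty$. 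The delicate step is making the $o(1)$ correction quantitative (via Euler--Maclaurin or a direct term-by-term comparison of the sum with the integral) so that a concrete cutoff $d_0$ can be fixed and the residual small cases checked by hand.
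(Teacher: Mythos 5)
Your reduction is the right one and matches the paper's skeleton: fix the eccentricities on a diametrical path via Remark~\ref{rem:fixedecc_onpath}, bound $I_{\ecc}(T)\le H^n(\mathbf{a}_d)=\log(n-r_d)$ via Proposition~\ref{prop:fix_max_entropy} and Lemma~\ref{lem:log(n-r)}, and compare against Lemma~\ref{lem:Iecc(Sn)}. Your observation that the statement must be read for trees is also correct (a long even cycle has all eccentricities equal, hence entropy $\log n$), and your numerics for $d=5,6,7$ are essentially right ($r_6\approx 0.170$, $r_7\approx 0.166$, $r_5\approx 0.125$), confirming that $d\ge 6$ is exactly where the method starts to work.

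The genuine gap is the uniform claim $r_d>\tfrac{1-\ln 2}{2}$ for \emph{all} $d\ge 6$: you verify only $d=6,7$ and defer the rest to an asymptotic tail whose quantitative form you do not supply. This is not a cosmetic omission. The leading-order asymptotic gives $r_d\approx\bigl(1-\tfrac{3\sqrt{e}}{4\cdot 2^{1/3}}\bigr)d\approx 0.019\,d$, which at moderate $d$ is swamped by lower-order corrections (e.g.\ the true $r_{10}\approx 0.238$ versus the crude prediction above $1$), and $r_d$ is not even monotone in $d$ ($r_7<r_6$). So pinning down an explicit $d_0$ via Euler--Maclaurin with controlled error terms is real work that remains to be done. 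The paper sidesteps this entirely: by Remark~\ref{rem:H^n_larger_subset} one may discard all but seven (resp.\ eight) entries of $\mathbf{a}_d$, by Proposition~\ref{prop:majorizesHineq} the surviving subsequence $(r,r+1,r+1,2r-1,2r-1,2r,2r)$ is majorized--dominated by the scale-invariant sequence $(3,4,4,5,5,6,6)$ (and similarly in the odd case by $(4,4,5,5,6,6,7,7)$), reducing every $d\ge 6$ to the two base computations you already did. If you import those two monotonicity tools, your argument closes with no asymptotics at all; without them, your proof is incomplete as written.
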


\begin{proof}
For even diameter $d=2r$ where $r \ge 3$ ($r$ is the radius) and $n \ge 2r+1,$ we have by respectively Remark~\ref{rem:H^n_larger_subset} (on subsequences), Proposition~\ref{prop:majorizesHineq} (on majorizing sequences) and Lemma~\ref{lem:log(n-r)} for the final inequality that
\begin{align*}
        H^n((r, r+1,r+1,r+2,r+2,\ldots, 2r-1,2r-1,2r,2r)) &\le H^n((r, r+1,r+1,2r-1,2r-1,2r,2r))\\ 
        &\le H^n((r, \frac 43 r,\frac 43 r,\frac 53 r,\frac 53 r,2r,2r))\\ 
        &= H^n((3,4,4,5,5,6,6))\\
        &<\log(n-0.17).
    \end{align*}
    For the second inequality, it is sufficient to note that $r+1 < \frac 43 r < \frac 53 r<2r-1 $ when $r \ge 4$ to deduce the majorization.
    Analogously for $d=2r-1$ odd and $n \ge 2d$ we have $$H^n((r,r, r+1,r+1,2r-2,2r-2,2r-1,2r-1)) \le H^n((4,4,5,5,6,6,7,7))<\log(n-0.16).$$
    Since $\frac {1-\ln(2)}{2}<0.154,$ we conclude by~\cref{lem:Iecc(Sn)} and Remark~\ref{rem:fixedecc_onpath}, the latter stating that the distances $r+1,r+2, \ldots, 2r-1,2r$ (each with multiplicity two) and $r$ all appear in the eccentricity sequence of the graph.
\end{proof}

\begin{prop}\label{prop:diam45_less_star}
    If a tree $T$ of order $n$ with diameter $5$ satisfies $I_{\ecc}(T)>I_{\ecc}(S_n)$, then it has at most $2$ vertices with eccentricity $5.$
    Every tree $T$ of order $n$ with diameter $4$ satisfies $I_{\ecc}(T)<I_{\ecc}(S_n)$
\end{prop}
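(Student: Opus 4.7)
The plan is to reduce both parts of the statement to a common two-step framework: first describe the eccentricity sequence of the tree, then apply a convexity argument in a single integer variable to reduce the comparison with $I_{\ecc}(S_n)$ to checking at most four boundary cases.

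For diameter $4$, the centre is the unique radius-achieving vertex $c$ with $\ecc(c)=2$. The triangle inequality through $c$ gives each neighbour of $c$ eccentricity at most $3$, while the existence of a depth-$2$ leaf in another subtree (forced by diameter $4$) gives eccentricity at least $3$; each vertex at distance $2$ from $c$ is necessarily a leaf with eccentricity $4$. Hence the eccentricity sequence has the form $(2,3^{k_3},4^{k_4})$ with $k_3,k_4\geq 2$ and $k_3+k_4=n-1$. For diameter $5$, the centre is an edge whose two endpoints have eccentricity $3$; a three-point median argument in the tree (for any vertex $u$ and diameter endpoints $v_0,v_5$, the sum $d(u,v_0)+d(u,v_5)=5+2\,d(u,P)$ is odd, so it cannot equal $6$) shows that no other vertex attains eccentricity $3$, so the sequence has the form $(3,3,4^{k_4},5^{k_5})$ with $k_4,k_5\geq 2$ and $k_4+k_5=n-2$.

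Next, set $f(k_4)=H((2,3^{n-1-k_4},4^{k_4}))$ and $g(k_5)=H((3,3,4^{n-2-k_5},5^{k_5}))$. The key claim is that both $f$ and $g$ are strictly convex in the integer variable, which is the natural extension of Claim~\ref{claim_convex_function} to the case where a fixed ``prefix'' is appended to the sequence. Writing $M=\sum a_i$, $A=\sum a_i\log a_i$ and $\delta=(b+1)\log(b+1)-b\log b$ for $b=3$ or $b=4$, direct differentiation yields $f''(k)=(1/M^2)\bigl[2\delta-1/\ln 2 - 2A/M\bigr]$, and combining the bound $A/M\leq \log(b+1)$ with the inequality $2b\log((b+1)/b)>1/\ln 2$ from Claim~\ref{claim_convex_function} gives strict positivity. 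By convexity, the maximum of $f$ on $\{2,\dots,n-3\}$ is attained at $k_4\in\{2,n-3\}$, and the maximum of $g$ on the range $k_5\geq 3$ is attained at $k_5\in\{3,n-4\}$; in the diameter-$5$ case this is precisely the contrapositive of the statement, since a tree with $k_5\geq 3$ satisfies $I_{\ecc}(T)\leq\max\{g(3),g(n-4)\}$.

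Finally, each of the four resulting boundary values admits a closed form that can be compared with $I_{\ecc}(S_n)=\log(n-\tfrac12)+\tfrac{1}{2n-1}$ (from the proof of Lemma~\ref{lem:Iecc(Sn)}); for instance, $f(2)=\log(n+\tfrac13)-\tfrac{18-10\log 3}{3n+1}$ and $f(n-3)=\log(n-1)+\tfrac{7-3\log 3}{2(n-1)}$, with analogous expressions for $g(3)$ and $g(n-4)$. Each strict inequality reduces to checking that the $1/n$-coefficient of the difference---an explicit combination involving $\log 3-1/\ln 2$ or $\log 5-1/\ln 2$---has the right sign, together with a direct finite verification for the small $n$ where the asymptotic margin is not yet decisive. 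The main obstacle is tightness: the four gaps between $I_{\ecc}(S_n)$ and the boundary values of $f$ and $g$ are only of order $10^{-2}/n$ (the one at $g(3)$ being an order of magnitude smaller than at the others), so Taylor expansions must be carried out with honest constants rather than coarse bounds, and the finite cases must be handled by explicit numerical inspection rather than a unified clean estimate.
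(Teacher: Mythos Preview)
Your approach is correct and takes a genuinely different route from the paper. The paper never argues convexity in the free parameter $k_4$ or $k_5$; instead it relies on the $H^n$-machinery developed in Section~\ref{sec:gen} (Proposition~\ref{prop:fix_max_entropy}, Lemma~\ref{lem:log(n-r)}, Remark~\ref{rem:H^n_larger_subset}). For diameter $5$ the paper splits into two cases: when $k_5\ge 4$ the eccentricity sequence contains $(3,3,4,4,5,5,5,5)$ as a subsequence, so $I_{\ecc}(T)\le H^n((3,3,4,4,5,5,5,5))<\log(n-0.157)$, which already beats the star bound $\log(n-\tfrac{1-\ln 2}{2})\approx\log(n-0.1534)$ from Lemma~\ref{lem:Iecc(Sn)}; when $k_5=3$ it uses the subsequence $(3,3,4^{40},5,5,5)$ to get $\log(n-0.15345)$ for $n\ge 45$ and checks $n\le 44$ numerically. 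Diameter $4$ is handled the same way.

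Your convexity computation $g''(k)=M^{-2}\bigl(2\delta-\tfrac{1}{\ln 2}-\tfrac{2A}{M}\bigr)$ together with $A/M\le\log(b+1)$ and the inequality $2b\log\tfrac{b+1}{b}>\tfrac{1}{\ln 2}$ (indeed the content of Claim~\ref{claim_convex_function}) is a clean and self-contained replacement for that machinery: it collapses the whole range $k_5\ge 3$ to the two endpoints $k_5=3$ and $k_5=n-4$ in one stroke, and your structural description of the eccentricity sequences (unique center for $d=4$, parity/median argument for $d=5$) is accurate. The trade-off is that the paper's $\log(n-r)$ bounds make the large-$n$ comparison with $I_{\ecc}(S_n)$ a one-line inequality between two explicit constants, whereas your endpoint expressions such as $f(2)=\log(n+\tfrac13)-\tfrac{18-10\log 3}{3n+1}$ require a slightly more delicate $1/n$-expansion (as you note, the margin at $g(3)$ is genuinely tiny). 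Both approaches ultimately defer the small-$n$ range to a finite numerical check, so neither is cleaner on that front.
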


\begin{proof}

    For diameter $5$,~\cref{rem:fixedecc_onpath} says that the eccentricity sequence of the graph at least contains each of $3,4,5$ with multiplicity no less than $2.$
    By Lemma~\ref{lem:Iecc(Sn)} it is sufficient to compute with Lemma~\ref{lem:log(n-r)} that\\ $H^n\left( (3,3,4,4,5,5,5,5) \right)< \log(n-0.157)$
     for every $n\geq 8$, and $H^n\left((3,3, \underbrace{4, \ldots, 4}_{40},5,5,5) \right)<\log(n-0.15345)$
     for every $n\geq 45$, and verify that
    $H\left((3,3, \underbrace{4, \ldots, 4}_{n-5},5,5,5) \right)<I_{\ecc}(S_n)$ for every $n \le 44.$

    For diameter $4$, similarly it is sufficient to compute that $H^n\left( (2,3,3,4,4,4) \right)< \log(n-0.18)$,\\$H^n\left((2,\underbrace{3, \ldots, 3}_{10},4,4) \right)<\log(n-0.154)$ (for $n$ sufficiently large)
    and $H\left((2,\underbrace{3, \ldots, 3}_{n-3},4,4) \right)<I_{\ecc}(S_n)$ for every $n \le 12.$ 
\end{proof}

By Propositions~\ref{prop:over6_less_star}~and~\ref{prop:diam45_less_star} there are only $3$ candidates for
extremal graphs.

The next result identifies the three largest eccentricity-entropy values for trees of a given order, along with their corresponding trees or eccentricity sequences.

\begin{prop}
    Among all trees of order $n$, the three largest possible values of $I_{\ecc}$ are obtained (in order) by a tree $T_3$
     of diameter $3$, a tree $T_5$ with
        eccentricity
        sequence $(3,3,\underbrace{4,\ldots,4}_{n-4},5,5)$, and the star $S_n$
\end{prop}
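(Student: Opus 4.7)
My plan combines Propositions~\ref{prop:over6_less_star} and~\ref{prop:diam45_less_star} with a structural observation, and closes with a direct comparison of three explicit expressions. First, those earlier propositions show that every tree of diameter $d\notin\{2,3,5\}$, as well as every tree of diameter~$5$ with more than two vertices of eccentricity~$5$, already satisfies $I_{\ecc}(T)<I_{\ecc}(S_n)$. So the top three must come from three families: the star $S_n$ (the unique tree of diameter~$2$), trees of diameter~$3$, and trees of diameter~$5$ with exactly two vertices of eccentricity~$5$.

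I next argue that in each of the two remaining families the eccentricity sequence is uniquely determined, so $I_{\ecc}$ takes a single value per family. For diameter~$3$, Remark~\ref{rem:fixedecc_onpath} fixes the two interior vertices of the diameter path at eccentricity~$2$ and the two endpoints at~$3$; every other vertex is a neighbor of an interior vertex and reaches one endpoint of the diameter at distance~$3$, so it also has eccentricity~$3$. Hence the sequence is $(2,2,3,\ldots,3)$. For diameter~$5$ under the restriction, the two eccentricity-$5$ vertices must be the endpoints $v_0,v_5$ of the diameter path $v_0v_1\cdots v_5$, and they are leaves (otherwise a neighbor of $v_0$ would lie at distance $\ge 6$ from $v_5$). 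A non-path vertex $w$ attached to the path at $v_k$ at distance $a\ge 1$ satisfies $d(w,v_0)=a+k$ and $d(w,v_5)=a+(5-k)$; requiring eccentricity $\le 3$ would force $2a+5\le 6$, impossible for integer $a\ge 1$. So every non-path vertex has eccentricity~$4$, and the sequence is exactly $(3,3,\underbrace{4,\ldots,4}_{n-4},5,5)$, as claimed for $T_5$.

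It then remains to compare the three explicit values
\begin{align*}
I_{\ecc}(S_n) &= \log(2n-1)-\tfrac{2n-2}{2n-1},\\
I_{\ecc}(T_3) &= \log\!\bigl(n-\tfrac{2}{3}\bigr)+\tfrac{4(\log 3 - 1)}{3n-2},\\
I_{\ecc}(T_5) &= \log n - \tfrac{6\log 3 + 10\log 5 - 32}{4n}.
\end{align*}
The inequality $I_{\ecc}(T_5)>I_{\ecc}(S_n)$ enjoys a comfortable $\Theta(1/n)$ positive gap and is settled by an elementary asymptotic expansion together with a finite check for the small $n\ge 6$ where $T_5$ first exists (for $n\in\{4,5\}$ the tree $T_5$ does not exist and $I_{\ecc}(T_3)>I_{\ecc}(S_n)$ is immediate). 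The hard part will be $I_{\ecc}(T_3)>I_{\ecc}(T_5)$, whose leading asymptotic constant is tiny. I plan to expand $I_{\ecc}(T_3)-I_{\ecc}(T_5)$ as a Taylor series in $y=1/n$: the coefficient of $y^k$ for $k\ge 2$ simplifies to
\[
\tfrac{(2/3)^{k-1}}{3}\Bigl(4(\log 3 - 1)-\tfrac{2}{k\ln 2}\Bigr),
\]
which is already positive at $k=2$ (and increases with~$k$), while the $y^1$-coefficient works out to a small positive constant by direct computation. Since the series converges for $n>2/3$, this shows $I_{\ecc}(T_3)>I_{\ecc}(T_5)$ for every integer $n\ge 1$, completing the proof.
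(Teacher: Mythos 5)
Your proposal is correct and follows essentially the same route as the paper: reduce to the three candidate eccentricity sequences via Propositions~\ref{prop:over6_less_star} and~\ref{prop:diam45_less_star} (you additionally spell out, correctly, why each of the two non-star families has a unique eccentricity sequence) and then compare the same three closed-form expressions, all of which you state correctly. The only substantive difference is how you certify the razor-thin inequality $I_{\ecc}(T_3)>I_{\ecc}(T_5)$ (leading term $\approx 0.0004/n$): the paper uses a one-line mean-value estimate, whereas you expand in $1/n$ and verify every coefficient is positive --- a valid and arguably cleaner certificate, with the single caveat that your coefficient for $k\ge 2$ does not literally increase in $k$ (the prefactor $(2/3)^{k-1}$ decays); what holds, and is all you need, is that the bracketed factor $4(\log 3-1)-\tfrac{2}{k\ln 2}$ is increasing and already positive at $k=2$, so every coefficient with $k\ge 2$ is positive.
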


\begin{proof}
We compute that (computations analogous to the ones in the proof of Lemma~\ref{lem:Iecc(Sn)})
\begin{align*}
    I_{ecc}(T_3)-I_{ecc}(T_5)&=\log(n-2/3)+\frac{4\log(3)-4\log(2)}{3n-2}-\log(n)-\frac{16-3\log(3)-5\log(5)}{2n}\\
    &\ge \log(n-2/3)-\log(n)+\frac{(4/3+3/2)\log(3) +5/2 \log(5) - (8+4/3)  }{n-2/3}>0
\end{align*}
and 
\begin{align*}
I_{ecc}(T_5) -I_{\ecc}(S_n)&=\log(n)+\frac{16-3\log(3)-5\log(5)}{2n}-\log\left(n-\frac{1}{2}\right)-\frac{1}{2n-1}\end{align*} is a strictly decreasing function (for $n\ge 6$) for which the limit is zero, so always positive.
\end{proof}

To end this section, we consider trees of given order and diameter to disprove Conjecture \ref{conj_tree_max_wrong}.
Note that~\cite[Rem.~4.4]{Dehmer_Kraus} already observed that the condition $d\ll n$ was necessary and $d$ needed to be fairly small for fixed order $n$.
Our result determines the eccentricity sequence of the maximizing trees exactly (up to the determination of $b$).

\begin{prop}\label{prop:tree_max_entropy_0.764d}
There exists a value $b$ such that the maximum value for $I_{{ecc}}$ among all trees of diameter $d$ and order $n$ is obtained by the trees with eccentricity sequence
\begin{itemize}
  \item $(\frac{d}{2},\frac{d}{2}+1,\frac{d}{2}+1,\ldots,d,d,\underbrace{b,\ldots,b}_{n-d-1})$ 
      for even $d$
  \item $(\frac{d+1}{2},\frac{d+1}{2},\ldots,d,d,\underbrace{b,\ldots,b}_{n-d-1})$ 
      for odd $d.$
\end{itemize}
Here $b\sim\frac{\sqrt[3]{2}}{\sqrt{e}}d$ as $ d \to \infty.$
\end{prop}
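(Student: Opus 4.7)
The plan is to fix the diameter $d$ and pick a diametrical path $v_0v_1\cdots v_d$ in an entropy-maximising tree $T$. By Remark~\ref{rem:fixedecc_onpath}, $\ecc(v_i)=\max\{i,d-i\}$, so the multiset of eccentricities along the path is exactly the initial segment of the sequence claimed in the statement: $\{r,r+1,r+1,\ldots,2r,2r\}$ for even $d=2r$, and $\{r,r,r+1,r+1,\ldots,2r-1,2r-1\}$ for odd $d=2r-1$. The remaining $n-d-1$ eccentricities of $T$ are integers at most $d$ and strictly greater than $r=\lceil d/2\rceil$, since the center of a tree (one or two vertices) lies on every diametrical path and hence no off-path vertex realises the radius.

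Next, I would regard the path eccentricities as fixed reals $a_1,\ldots,a_{d+1}$ and the $n-d-1$ non-path eccentricities as positive integer variables, and invoke Proposition~\ref{prop:fix_max_entropy_integers} (valid for $d\geq 3$, so that $a_j>1$): the entropy is maximised exactly when all non-path eccentricities take one common value $b\in\{\bfloor{\beta},\bceil{\beta}\}$, with $\beta$ defined by $\log\beta=\bigl(\sum_j a_j\log a_j\bigr)/\bigl(\sum_j a_j\bigr)$. Realizability of any prescribed $b\in\{r+1,\ldots,d\}$ follows by attaching the $n-d-1$ extra vertices as pendant leaves to the path vertex $v_{b-1}$: each such leaf has eccentricity $\max\{b,\,d-b+2\}=b$, and neither the path eccentricities nor the diameter change. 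This identifies the extremal tree as having exactly the eccentricity sequence from the statement.

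To pin down the asymptotic $b\sim\tfrac{\sqrt[3]{2}}{\sqrt e}\,d$, I would bound the sums by integrals. For $d=2r$ one has $\sum_j a_j=3r^2+2r$ and $\sum_j a_j\log a_j=2\int_r^{2r}x\log x\,dx+O(r\log r)=4r^2+3r^2\log r-\tfrac{3r^2}{2\ln 2}+O(r\log r)$, which yields $\log\beta=\log r+\tfrac{4}{3}-\tfrac{1}{2\ln 2}+o(1)$ and therefore $\beta\sim r\cdot 2^{4/3}/\sqrt e=\tfrac{\sqrt[3]{2}}{\sqrt e}\,d$; the odd case is identical at leading order. The main obstacle is the interplay between the integer rounding and the feasibility window $\{r+1,\ldots,d\}$: for $d$ large the asymptotic value $\approx 0.764\,d$ lies well inside this window, so Proposition~\ref{prop:fix_max_entropy_integers} applies verbatim, whereas in the small-$d$ cases with $\beta<r+1$, the monotonicity in Proposition~\ref{prop:fix_max_entropy} forces the optimal admissible integer to be the boundary value $r+1$, which is still consistent with the stated form.
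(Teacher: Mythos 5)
Your proposal is correct and follows essentially the same route as the paper: fix the path eccentricities via Remark~\ref{rem:fixedecc_onpath}, equalize the remaining $n-d-1$ eccentricities via Proposition~\ref{prop:fix_max_entropy_integers} (realized by pendant leaves attached to a suitable path vertex), and obtain $b\sim\frac{\sqrt[3]{2}}{\sqrt{e}}d$ from the same integral approximation of $\sum a_j\log a_j/\sum a_j$. Your write-up is in fact somewhat more careful than the paper's about realizability and the feasibility window $\{r+1,\ldots,d\}$, which the paper dispatches in one sentence.
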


\begin{proof}

Let $T$ be a tree attaining the maximum value for $I_{ecc}$ among trees of order $n$. 
Since we can attach pendent leaves with any eccentricity in the range $[\rad(T)+1, \diam(T)]$ to a diametral path, we conclude from Remark~\ref{rem:fixedecc_onpath} (which reminds us of the unavoidable eccentricities) and Proposition~\ref{prop:fix_max_entropy_integers} (telling that the other eccentricities are all equal in the extremal case).

For $d$ sufficiently large, we have the following computations, which we compute exactly up to $1+O\left( \frac 1d \right)$ factor.
\begin{align*}
    \log(b) &\sim \frac{2\sum_{i=\frac{d}{2}}^{d}i\log (i)}{2\sum_{i=\frac{d}{2}}^{d}i}\\
    &\sim  \frac{\int_{\frac{d}{2}}^{d} x log (x) \d x}{\int_{\frac{d}{2}}^{d} x \d x}\\
    &\sim \frac{ \frac 38 d^2 \log (d) - \frac 3 {16\ln(2)} d^2 +\frac{1}{8}d^2}{\frac 38 d^2} \\
    &= \log (d) - \frac {1}{2\ln(2)} +\frac{1}{3}.\\
\end{align*}
Since $\exp( log(d)/d)=1+o(1), $ we conclude that 
\begin{align*}
b&\sim  \frac{\sqrt[3]{2}}{\sqrt{e}}d \sim 0.764 d.
\end{align*}
\end{proof}

\begin{cor}
    Since the eccentricity-entropy $I_{ecc}$ is an example for $I_{f_{e}},$
    Conjecture \ref{conj_tree_max_wrong} is not true when $d$ is sufficiently large, as then the value $b$ in Proposition~\ref{prop:tree_max_entropy_0.764d} satisfies $b> \bceil{\frac{d}{2}}+1.$
\end{cor}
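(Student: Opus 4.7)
The plan is a one-shot comparison between the eccentricity sequence of the tree $T^{\star}$ claimed extremal in Conjecture~\ref{conj_tree_max_wrong} and the eccentricity sequence of the actual maximizer pinned down in Proposition~\ref{prop:tree_max_entropy_0.764d}. The first step is to read off the eccentricities of the $n-d-1$ pendant leaves of $T^{\star}$: a central vertex of the diametral path has eccentricity $\bceil{d/2}$ by Remark~\ref{rem:fixedecc_onpath}, and since each pendant leaf of the glued star sits at distance $1$ from this central vertex, its eccentricity is exactly $\bceil{d/2}+1$. Hence every multiplicity-$(n-d-1)$ entry of the eccentricity sequence of $T^{\star}$ equals the smallest admissible value $\bceil{d/2}+1$.

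Next I would invoke Proposition~\ref{prop:tree_max_entropy_0.764d}, which shows that the actual maximizer of $I_{ecc}$ among trees of order $n$ and diameter $d$ assigns the $n-d-1$ repeated leaf eccentricities a common value $b$ with $b \sim (\sqrt[3]{2}/\sqrt{e})\, d \approx 0.764\, d$ as $d\to\infty$. Since $0.764\, d - \bceil{d/2} - 1 \to \infty$, one has $b > \bceil{d/2}+1$ whenever $d$ is sufficiently large, while still $b \in [\bceil{d/2}+1,\, d]$, the range of eccentricities realizable for a pendant leaf of a tree of diameter $d$ (by attaching it to the appropriate vertex of the diametral path). Therefore the tree from Proposition~\ref{prop:tree_max_entropy_0.764d} strictly beats $T^{\star}$ in $I_{ecc}$, so $T^{\star}$ is not extremal.

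Because $I_{ecc}$ is the specialization of $I_{f_e}$ to $c_i \equiv 1$, this single counterexample already falsifies the $I_{f_e}$ part of Conjecture~\ref{conj_tree_max_wrong}. No genuine obstacle arises; the only ingredient worth double-checking is the asymptotic numerical inequality $0.764\, d > \bceil{d/2}+1$, which the statement of the corollary has already recorded.
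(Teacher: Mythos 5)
Your argument is correct and matches the paper's reasoning exactly: the paper states this corollary without a separate proof, the justification being precisely that the conjectured tree forces the $n-d-1$ repeated eccentricities to equal $\bceil{d/2}+1$ while Proposition~\ref{prop:tree_max_entropy_0.764d} shows the optimizer uses $b\sim 0.764\,d$, which exceeds $\bceil{d/2}+1$ for large $d$. Your write-up simply fills in the same details (leaf eccentricities via Remark~\ref{rem:fixedecc_onpath}, realizability of $b$ in $[\bceil{d/2}+1,d]$, strictness via monotonicity below $\b$), so nothing further is needed.
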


\section{Asymptotic minimum Wiener-entropy of graphs}\label{sec:asminIw} 

We start proving the following two elementary lemmas.
\begin{lem}\label{lem:sigmavsW}
    Let $G$ be any connected graph. Then for every vertex $v,$ $(n-1)\sigma(v) \ge W(G).$
    Equality holds if and only if $G$ is a star and $v$ is its center. 
\end{lem}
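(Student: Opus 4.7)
The plan is to massage the claimed inequality $(n-1)\sigma(v)\ge W(G)$ into the more symmetric form $2(n-1)\sigma(v)\ge 2W(G) = \sum_{u\ne w} d(u,w)$, which is naturally obtained by applying the triangle inequality once to each ordered pair. Concretely, for each ordered pair $(u,w)$ with $u\ne w$ I would write $d(u,w)\le d(u,v)+d(v,w)$. Summing both sides over all such ordered pairs, the left-hand side collects to $2W(G)$, while on the right each of the two terms contributes $(n-1)\sigma(v)$ (fixing the index in $d(u,v)$ or $d(v,w)$ and letting the other range over the $n-1$ remaining vertices). This gives the inequality $2W(G)\le 2(n-1)\sigma(v)$ and hence the bound.

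For the equality case, the sum of inequalities is tight iff every individual instance $d(u,w)= d(u,v)+d(v,w)$ is tight, i.e.\ $v$ lies on a shortest $u$-to-$w$ path for every ordered pair $u\ne w$. The key observation is then a two-line deduction that this forces the star structure: first, by taking $w$ to be any neighbour of $u$ along a shortest $u$-to-$v$ path, tightness yields $1 = d(u,w) = d(u,v)+d(v,w)\ge d(u,v)$, so every vertex is at distance at most $1$ from $v$, meaning $v$ is adjacent to all other vertices. Second, applying tightness again to any two distinct non-central vertices $u,w$ gives $d(u,w) = 2$, so no two non-central vertices are adjacent. Hence $G$ must be $S_n$ with $v$ as center, and conversely one checks immediately that the star achieves equality.

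I expect no real obstacle here; the only point of care is to sum the triangle inequality over \emph{ordered} pairs so that the factor of $2$ matches the identity $\sum_{u}\sigma(u) = 2W(G)$, and to handle the diagonal terms $w=u$ cleanly (they contribute $d(u,u)=0$ to the left-hand side and $2d(u,v)$ to the right-hand side, which is harmless since the inequality is preserved). This lemma is really a packaging of the standard fact that a vertex $v$ is ``median-like'' with respect to all pairs precisely when $G$ is a star centered at $v$.
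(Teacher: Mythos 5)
Your proof is correct and is essentially the paper's argument: both sum the triangle inequality $d(u,w)\le d(u,v)+d(v,w)$ over pairs of vertices (the paper over unordered pairs in $V\setminus\{v\}$ plus a separate $\sigma(v)$ term, you over ordered pairs of distinct vertices), and both read off the equality case as $v$ lying on a shortest $u$--$w$ path for every pair, which forces $V\setminus\{v\}$ to be independent and $G$ to be a star centered at $v$. One small nit: the diagonal terms must genuinely be excluded rather than merely ``handled'' --- including them would inflate the right-hand side to $2n\sigma(v)$ and yield a weaker bound --- but your actual count over ordered pairs with $u\ne w$ already does this correctly.
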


\begin{proof}
    By rewriting the sum and applying the triangle-inequality,
    \begin{align*}
        (n-1)\sigma(v) &=\sigma(v)+(n-2)\sum_{u \in V \backslash v} d(v,u)\\
        &=\sigma(v)+\sum_{ \{w, u\} \subset V \backslash v} \left(d(u,v)+d(v,w)\right)\\
        &\ge \sigma(v)+\sum_{\{w, u\} \subset V \backslash v} d(u,w)\\
        &=W(G).
    \end{align*}
    Equality only appears if for every $\{w, u\} \subset V \backslash v$, there is a shortest path from $u$ to $w$ containing $v$. In particular $d(u,w) \ge 2$ for all $\{w, u\} \subset V \backslash v$ implies that $V\backslash v$ is an independent set. Since $G$ is a connected graph, this implies that $G$ is a star with center $v$.
\end{proof}

\begin{lem}\label{lem:diffsigma}
    Let $G$ be any connected graph. 
    Let $uv \in E(G)$ be an edge.
    Then $\sigma(v) \ge n-1.$
    Also $\abs{\sigma(u)-\sigma(v)} \le n-2, $ with equality if and only if $u$ or $v$ is a pendent vertex.
\end{lem}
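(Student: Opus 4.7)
For the first inequality, I would note that it follows directly from the connectedness of $G$: each of the $n-1$ vertices distinct from $v$ is at distance at least $1$ from $v$, so $\sigma(v) = \sum_{w \neq v} d(v,w) \ge n-1$ (the edge $uv$ is not needed for this bound).

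For the second inequality, the plan is to exploit cancellation of the $d(u,v)$ terms together with the triangle inequality. Writing out
\[
\sigma(u) - \sigma(v) = \Big(d(u,v) + \sum_{w \neq u,v} d(u,w)\Big) - \Big(d(v,u) + \sum_{w \neq u,v} d(v,w)\Big) = \sum_{w \neq u,v} \bigl(d(u,w) - d(v,w)\bigr),
\]
each of the $n-2$ summands satisfies $|d(u,w) - d(v,w)| \le d(u,v) = 1$ by the triangle inequality applied via the edge $uv$. Summing these $n-2$ bounds gives $|\sigma(u)-\sigma(v)| \le n-2$.

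The main work is the equality characterization. Equality forces every summand to equal $\pm 1$ with a common sign; without loss of generality $d(v,w) = d(u,w)+1$ for every $w \in V \setminus \{u,v\}$. I would argue that this forces $v$ to be a pendent vertex: any neighbor $w$ of $v$ different from $u$ would have $d(v,w)=1$ and hence $d(u,w)=0$, which is impossible. So the only neighbor of $v$ is $u$, i.e.\ $v$ is pendent. Conversely, if $v$ is a pendent vertex adjacent to $u$, then every shortest path from $v$ to any other vertex passes through $u$, giving $d(v,w)=1+d(u,w)$ for all $w\neq u,v$, and the cancellation computation above yields $\sigma(v)-\sigma(u)=n-2$ exactly. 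I do not expect any serious obstacle here; the only subtlety is handling the case $w=u$ (or $w=v$) correctly in the sums, which is taken care of by the cancellation of the $d(u,v)=d(v,u)=1$ terms.
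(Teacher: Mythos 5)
Your proof is correct and follows the same route as the paper for both inequalities: the trivial lower bound from $n-1$ distances each at least $1$, and the cancellation of the $d(u,v)$ terms followed by the termwise triangle inequality $\abs{d(u,w)-d(v,w)}\le 1$ summed over the $n-2$ remaining vertices. In fact your write-up is more complete than the paper's, which asserts the equality characterization in the statement but never argues it in the proof; your forward direction (equality forces a common sign and $\abs{d(u,w)-d(v,w)}=1$ for every $w$, so a neighbour $w\ne u$ of $v$ would give the impossible $d(u,w)=0$) and your converse (all shortest paths from a pendent $v$ pass through its unique neighbour $u$, so $\sigma(v)-\sigma(u)=n-2$) correctly supply the missing details.
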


\begin{proof}
    The first observation is trivial, since $\sigma(v)$ is the sum of $n-1$ distances that are all at least one.
    Let $w$ be a vertex different from $u$ and $v$, then $d(u,w)\le d(u,v)+d(v,w)=d(v,w)+1$ and vice versa, so $\abs{d(u,w)-d(v,w)} \le 1.$
    Hence we conclude, by applying the triangle inequality again; $\abs{\sigma(u)-\sigma(v)}=\abs{\sum_{w \in V \backslash \{u,v\} } \left(d(u,w)-d(v,w)\right) } \le \sum_{w\in V\backslash\{u,v\}} |d(u,w)-d(v,w)|
    \le n-2. $
\end{proof}

\begin{prop}\label{prop:Iwbroom}
    Let $T$ be the broom consisting of a path $P_k$ with one of its end vertices $c$ connected with $n-k$ pendent vertices.
For fixed $\frac{1}{3}> \epsilon>0$, let $k=n^{1/2+\epsilon}$ and let $n$ be sufficiently large.
Then $I_w(T) \sim \frac{3+2\epsilon}{4} \log(n).$
\end{prop}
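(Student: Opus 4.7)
The plan is to evaluate the identity
\[I_w(T) = \log(2W(T)) - \frac{1}{2W(T)}\sum_v \sigma(v)\log \sigma(v)\]
via asymptotic estimates of both sides. Label the path as $v_1 v_2 \cdots v_k = c$, set $m=n-k$, and parametrise a path vertex by its distance $l = k-i$ from $c$. A direct distance count gives
\[\sigma(u) = \frac{k(k+1)}{2} + 2(m-1), \qquad \sigma(v_{k-l}) = \frac{(k-l)(k-l-1)}{2} + \frac{l(l+1)}{2} + m(l+1).\]
Under $k = n^{1/2+\epsilon}$ with $0<\epsilon<1/3$ the orderings $k^2 = n^{1+2\epsilon} \gg n \gg k$ hold, so to leading order $\sigma(u) \sim k^2/2$ and $\sigma(v_{k-l}) \sim (k-l)^2/2 + nl$. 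The two competing terms in the path transmission are equal near the threshold $l_0 = k^2/(2n) = n^{2\epsilon}/2$, so I would split the path vertices into \emph{region $A$} (with $l \le l_0$ and $\sigma \sim k^2/2$) and \emph{region $B$} (with $l > l_0$ and $\sigma \sim nl$).

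Next, I would compute $2W$: the pendents contribute $m\sigma(u) \sim nk^2/2$, region $B$ contributes $\sum_{l_0 \le l \le k} nl \sim nk^2/2$, and region $A$ contributes at most $O(l_0 \cdot k^2) = o(nk^2)$. Hence $2W \sim nk^2$ and $\log(2W) \sim (2+2\epsilon)\log n$. For $\sum_v \sigma(v)\log\sigma(v)$, view it as $2W \cdot E[\log\sigma]$ under the distribution $P(v) = \sigma(v)/(2W)$. The pendent block and path-$B$ block each carry mass $\sim 1/2$: on the pendents $\log \sigma(u) \sim (1+2\epsilon)\log n$, while on the path-$B$ block the conditional density of $l$ is $\sim 2l/k^2$ on $[0,k]$ (since $\sigma \sim nl$), so via the elementary integral
\[\int_0^k \frac{2l}{k^2}\log l\, dl = \log k - \frac{1}{2\ln 2}\]
the conditional expectation is $E[\log(nl) \mid B] \sim \log n + \log k$. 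Assembling the two block contributions to $E[\log\sigma]$ and subtracting from $\log(2W)$ produces the claimed asymptotic.

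The main obstacle is controlling the sub-leading corrections: the transition zone around $l \approx l_0$, the $O(k^3)$ correction from the $l^2/2$ and $(k-l)^2/2$ terms in $\sigma(v_{k-l})$, the Riemann-to-integral error in the path-$B$ sum, and the $O(1)$ pieces in each logarithm must all be shown to contribute $o(\log n)$ to $I_w(T)$ after normalisation by $2W$. The lower bound $\epsilon > 0$ is needed so that $\sigma(u)$ is dominated by $k^2/2$ (rather than being comparable with $2m \sim 2n$); the upper bound $\epsilon < 1/3$ keeps the various polynomial-in-$n$ correction terms subdominant in the final asymptotic.
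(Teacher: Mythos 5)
Your proposal follows essentially the same route as the paper's proof: identical exact formulas for the transmissions and the Wiener index, the same split of the path at distance $\approx n^{2\epsilon}$ from the star centre (discarding the near block because it carries $o(1)$ probability mass while each $-\log p$ is $O(\log n)$), and the same integral evaluation of the far block and the pendent block. One remark: with $k=n^{1/2+\epsilon}$ both your bookkeeping and the paper's own final display actually yield $\left(\tfrac34+\tfrac{\epsilon}{2}\right)\log n=\tfrac{3+2\epsilon}{4}\log n$ rather than $\tfrac{3+\epsilon}{4}\log n$ (the stated exponent would correspond to $k=n^{(1+\epsilon)/2}$), so your closing claim that the blocks assemble to ``the claimed asymptotic'' inherits a constant discrepancy that is already present in the statement, not a flaw in your argument.
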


\begin{proof}
    Note that 
    \begin{align*}
        W(T)&=W(P_k)+W(S_{n-k+1})+(n-k) \left(\sum_{i=1}^k i\right)\\
        &=\binom{k+1}{3} +(n-k)^2+(n-k)\binom{k+1}{2}\\
        &\sim \frac{nk^2}{2}.
    \end{align*}

    If $\ell$ is one of the end vertices of the star, then $\sigma(\ell)=2(n-k-1)+\sum_{i=1}^k i \sim \frac{k^2}{2}.$

    If $v$ is a vertex on the path at distance $i-1$ from $c$, then $\sigma(v)=i(n-k)+ \binom{i}{2}+\binom{k-i+1}{2}.$
    The sum of the transmissions for the vertices at distance $i-1$ from $c$ for $2\le i \le n^{2\epsilon}$ is bounded by
    $$\sum_{i=2}^{ n^{2\epsilon}} \left(in + \binom{k}{2}\right) < n\frac{n^{4\epsilon}}{2} + n^{2\epsilon} \frac{n^{1+2\epsilon}}{2}=n^{1+4\epsilon}.$$
    
    Hence the sum of associated values $p_i=\frac{\sigma(v)}{2W(G)}$ is of the order
    $\frac{n^{1+4\epsilon}}{nk^2}=O(n^{2\epsilon-1})=o(1).$
    By Lemma~\ref{lem:sigmavsW}, we have that every $p_i$ is at least $\frac{1}{2(n-1)}$ and thus $- \log(p_i) \le \log(2(n-1)).$
    This implies that the contribution to $I_w(G)$ of the vertices at distance $i-1$ from $c$ for $2\le i \le n^{2\epsilon}$ is $o(\log n).$
    
    When $i>n^{2\epsilon}$, then $\sigma(v) \sim in.$
    As such, the probability $p_i =\frac{\sigma(v) }{2W(G)} \sim \frac{i}{k^2}.$
    For an end vertex $\ell$ of the star, we have that the associated probability for the functional $\frac{\sigma(\ell) }{2W(G)} \sim \frac{ \frac{k^2}{2}}{k^2n} \sim \frac{1}{2n}.$
    
    All together, this implies that 
    \begin{align*}
        I_w(G) &\sim \sum_{i=n^{2\epsilon}}^k \frac{i}{k^2} \left(2\log(k)-\log(i)\right) + n \frac{1}{2n} \log(2n)\\
        &\sim \log(k) - \frac{1}{k^2} \sum_{i=2}^k i \log(i)  +\frac 12 \log n\\
        &\sim \frac 12 \log n + \frac 12 \log(k)\\
        &=\frac{3+2\epsilon}{4} \log(n).
    \end{align*}
    
    Here we used that $\int_1^k x \log x \d x \sim \frac{k^2}{2} \log(k).$ 
\end{proof}

\begin{thr}\label{thr:minIw_as}
    Let $G$ be a connected graph of order $n$.
    Then $I_w(G) > \frac 34 (1+o(1))\log (n).$
\end{thr}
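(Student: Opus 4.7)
The plan is to lower-bound $I_w(G) = \log S - \tfrac{1}{S}\sum_v \sigma(v)\log\sigma(v)$ (with $S = 2W(G)$) by showing that the transmission sequence cannot spread too much and then applying convexity of $x\log x$. Throughout, set $m := \min_v\sigma(v)$, $M := \max_v\sigma(v)$, $d := \diam(G)$, and $\alpha := nm/S$.

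First I would show the structural bound $m \geq d\sqrt{n}/4$ for $n \geq 2$. Lemma~\ref{lem:diffsigma} immediately gives $m \geq n-1$. Moreover, a minimum-transmission vertex $v^{*}$ has eccentricity at least $\lceil d/2\rceil$ by applying the triangle inequality to a diametral pair; a shortest path from $v^{*}$ to its farthest vertex then certifies at least one vertex at each distance $1,\ldots,\lceil d/2\rceil$ from $v^*$, giving $m \geq \binom{\lceil d/2\rceil+1}{2} \geq d^2/8$. Taking the geometric mean, $m \geq \sqrt{(n-1)\cdot d^2/8} \geq d\sqrt{n}/4$. Iterating Lemma~\ref{lem:diffsigma} along any shortest path from $v^{*}$ yields $\sigma(v) \leq m + d(n-2)$ for every $v$, hence $q := M/m \leq 1 + d(n-2)/m \leq 1 + 4\sqrt{n}$.

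Next, since $f(x) = x\log x$ is convex and each $\sigma(v) \in [m,M]$ with $\sum_v \sigma(v) = S$ fixed, $\sum_v f(\sigma(v))$ is maximized when all values are pushed to the two endpoints $m$ and $M$ (with multiplicities forced by the sum constraint). By Lemma~\ref{lem:sigmavsW}, $S \leq (2n-2)m$, so $\alpha \in (1/2, 1]$. Computing the resulting (fractional) two-point distribution gives
\[
I_w(G) \;\geq\; \log n - \log\alpha - (1-\alpha)\cdot\frac{q\log q}{q-1}.
\]

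Finally, using $\tfrac{q\log q}{q-1} = \log q + \tfrac{\log q}{q-1} \leq \log q + \log e$ (the ratio $\log q/(q-1)$ decreases from its $q=1$ limit $\log e$) together with $\log q \leq \tfrac{1}{2}\log n + O(1)$ from the bound on $q$,
\[
I_w(G) \;\geq\; \frac{1+\alpha}{2}\log n - \log\alpha - O(1).
\]
For $n$ large the right-hand side is increasing in $\alpha$ on $[1/2, 1]$, so it is minimized as $\alpha \to (1/2)^{+}$, giving $I_w(G) \geq \tfrac{3}{4}\log n - O(1) = \tfrac{3}{4}(1+o(1))\log n$. The main obstacle is the structural step: neither $m\geq n-1$ nor $m\geq d^2/8$ alone suffices, but their AM-GM combination is exactly what forces $q = O(\sqrt{n})$ and yields the constant $3/4$, matching the asymptotic behaviour of the brooms from Proposition~\ref{prop:Iwbroom}.
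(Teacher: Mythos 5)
Your argument is correct, but it reaches the constant $\tfrac{3}{4}$ by a genuinely different route than the paper. The paper sorts the probabilities $p_i=\sigma(v_i)/2W(G)$ and uses Lemma~\ref{lem:sigmavsW} for the floor $p_n\ge \tfrac{1}{2n}$ together with Lemma~\ref{lem:diffsigma} for the consecutive-gap bound $p_i-p_{i+1}<\tfrac{1}{n}$; these two constraints force the sorted sequence to be majorized by an explicit staircase of length $k\approx\sqrt n$ (steps of $1/n$ down to the floor), and Karamata's inequality then yields $\tfrac12\log n$ from the flat part and $\tfrac14\log n$ from the staircase. You instead compress all the structure into the single ratio bound $q=M/m=O(\sqrt n)$, obtained by combining $m\ge n-1$ with the geodesic bound $m\ge\binom{\lceil d/2\rceil+1}{2}=\Omega(d^2)$ via AM--GM, paired with $M-m\le d(n-2)$ from iterating Lemma~\ref{lem:diffsigma} along a shortest path from the minimizing vertex; a two-endpoint chord bound for the convex function $x\log x$ then suffices, with $\alpha=nm/S\ge\tfrac12$ (again from Lemma~\ref{lem:sigmavsW}) delivering the final coefficient $\tfrac{1+\alpha}{2}\ge\tfrac34$. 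Both arguments are tight against the broom of Proposition~\ref{prop:Iwbroom}. What each buys: your route avoids the majorization/Karamata machinery and the sorted-sequence bookkeeping entirely (note that the paper's gap bound for \emph{sorted}, not necessarily adjacent, transmissions quietly relies on a small connectivity argument, whereas your path-iteration use of Lemma~\ref{lem:diffsigma} is rigorous as stated), at the price of the extra structural step $m=\Omega(d\sqrt n)$; the paper's staircase, on the other hand, records the actual shape of the near-extremal transmission sequence and so better explains \emph{why} the broom is the minimizer, information that your two-point relaxation deliberately discards. Two small points to tighten: state the chord inequality $f(x)\le\frac{M-x}{M-m}f(m)+\frac{x-m}{M-m}f(M)$ explicitly to justify the ``fractional two-point distribution'' upper bound on $\sum_v\sigma(v)\log\sigma(v)$, and treat the degenerate case $M=m$ (where $q-1=0$ but $I_w(G)=\log n$ trivially) separately.
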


\begin{proof}
    Let $p_1, \ldots, p_n$ be the $n$ fractions of the form $\frac{\sigma(v)}{2W(G)}$, ordered in a decreasing order, i.e., $p_i \ge p_{i+1}$ for every $i$.
    By Lemma~\ref{lem:sigmavsW}, we know $p_n \ge \frac{1}{2(n-1)}>\frac 1{2n}.$
    By Lemma~\ref{lem:diffsigma} and $2W(G) \ge n(n-1)$, we also note that $p_i - p_{i+1} < \frac{n-2}{n(n-1)}<\frac 1n$ for every $1 \le i \le n-1.$
    
    Let $k$ be the largest number for which $k^2-k \le n.$
    Then the sequence $(p_1, \ldots, p_n)$ is majorized by the sequence $(a_1, \ldots, a_n)$ with $a_1=\frac{n-k^2+3k-1}{2n}$, $a_i = \frac{2k-2i+1}{2n}$ for $2 \le i \le k$ and $a_i=\frac{1}{2n}$ whenever $n \ge i>k.$
    
    Since the function $f(x)=-x \log(x)$ is concave, by Karamata's inequality we have 
    $$\sum_i f(p_i) \ge \sum_i f(a_i).$$
    Note that one term separately is neglectible, i.e., $f(a_1) = o(\log(n))$ and so we can replace it by $f\left( \frac{2k-1}{2n} \right)$ in the estimation.
    
    Now 
    $$(n-k)f\left( \frac{1}{2n} \right) =\frac{1}{2}(1+o(1)) \log(2n)=\frac{1}{2}(1+o(1)) \log(n)$$
    and
    \begin{align*}
        \sum_{i=1}^{k-1} f\left( \frac{2i-1}{2n} \right) &\ge
        \sum_{i=1}^{k-1} f\left( \frac{i}{n} \right)\\
        &=
        \frac 12 (1+o(1)) \log(n) - \frac{1}{n} \sum_{i=1}^{k-1} i \log(i)\\
        &\sim \frac 12 \log(n) - \frac 1n \int_1^{k-1} x \log x \d x\\
        &\sim  \frac 12 \log(n) -\frac 14 \log n\\
        &= \frac 14 \log(n).
    \end{align*}
    
    Together, this implies that $$\sum_i f(p_i) \ge \frac 34 (1+o(1)) \log (n).$$
\end{proof}

\section{Further thoughts on the extremal graphs for $I_w$}\label{minIw_extrG_ft}

In Section~\ref{sec:asminIw}, we determined the minimum value of $I_w(G)$ among graphs of order $n$ asymptotically.
A precise result, or characterizing the extremal graphs seems to be much harder. In this section, we present some thoughts about the extremal graphs.

From the idea of Theorem~\ref{thr:minIw_as},
for the class of trees, the trees of order $n$ with minimum Wiener-entropy are expected to be brooms for sufficiently large $n$.
If this intuition is true, the extremal broom would be completely determined by the length $k$ of the path in~\cref{prop:Iwbroom}.
If $k=o(\sqrt n)$, then $I_w \sim \log_2(n)$ (the graph is nearly a clique; $n-k$ of the $p_i$ are approximately $\frac{1}{n}).$
Together with~\cref{prop:Iwbroom}, we know the optimal choice is of the form $k(n)=n^{1/2+o(1)}$. Here $k(n)$ can be expected to be a step-wise increasing function, since it only attains integers.

For small $n$, the asymptotic analysis does not give a clear indication of the extremal tree.
In particular for $n \le 16$, the extremal trees are not brooms.
For $3\leq n\leq 18$, the trees with the minimum Wiener-entropy are listed in Table~\ref{Tab:extT_minIw}\footnote{See~\url{https://github.com/yndongmath/wiener-entropy} for verification.}.

Next, we focus on graphs instead of trees. 
As a corollary of Proposition~\ref{prop:fix_max_entropy}, one can easily observe that if there are $2$ vertices $u,v$ for which $\sigma(u)$ and $\sigma(v)$ are small (smaller than the corresponding value $\b$) and $uv$ does not affect $\sigma(w)$ for $w \not \in \{u,v\}$, then $uv$ is always present.
The latter also holds for a set of vertices.
Starting from a broom, by the previous, all edges should be present between the leaves of the star, and we obtain the concatenation of a path and a clique.
It has to be observed that for $n \le 15$, the extremal graphs are not of this form.
This is not surprising as the asymptotic estimates and intuition in the proof of Theorem~\ref{thr:minIw_as} is only about big order behaviour, and also for trees the broom was not extremal for small $n$.
For $5 \le n \le 9$, the extremal graphs are presented in Figure~\ref{fig:extG_minIw}.

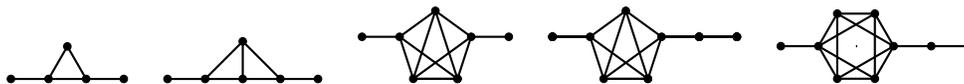
\begin{figure}[h]
\centering
\begin{tikzpicture}
\foreach \x in {0,0.5,1,1.5}{\draw[fill] (\x,0) circle (0.05);
}
\draw[fill] (0.75,0.433) circle (0.05);
\foreach \x in {0,0.5,1}{\draw[thick](\x+0.5,0) -- (\x,0);}
\foreach \x in {0.5,1}{\draw[thick](0.75,0.433) -- (\x,0);
}
\end{tikzpicture}
\quad
\begin{tikzpicture}
\foreach \x in {0.5}{\draw[fill] (90:\x) circle (0.05);
\draw[thick](90:\x-0.5) -- (90:\x);
\draw[thick](90:\x) -- (0:\x-1);
\draw[thick](90:\x) -- (0:\x);
}
\draw[fill] (0:-1) circle (0.05);
\foreach \x in {-0.5,0,...,1}{\draw[fill] (0:\x) circle (0.05);
\draw[thick](0:\x-0.5) -- (0:\x);
\draw[color=white](0,1)--(0,1);
}
\end{tikzpicture}
\quad
\begin{tikzpicture}
\foreach \x in {-54,18,...,306}{\draw[fill] (\x:0.5) circle (0.05);
\draw[thick](\x-72:0.5) -- (\x:0.5);
}
\foreach \x in {-54,18,90,234}{
\draw[thick](\x-144:0.5) -- (\x:0.5);
}
\draw[thick](-0.4755,0.1545) -- (-0.9755,0.1545);
\draw[thick](0.4755,0.1545) -- (0.9755,0.1545);
\draw[fill] (0.9755,0.1545) circle (0.05);
\draw[fill] (-0.9755,0.1545) circle (0.05);
\end{tikzpicture}
\quad
\begin{tikzpicture}
\foreach \x in {-54,18,...,306}{\draw[fill] (\x:0.5) circle (0.05);
\draw[thick](\x-72:0.5) -- (\x:0.5);
}
\foreach \x in {-54,18,90,234}{
\draw[thick](\x-144:0.5) -- (\x:0.5);

\draw[thick](-0.4755,0.1545) -- (-0.9755,0.1545);
\draw[thick](1.4755,0.1545) -- (0.4755,0.1545);
\draw[fill] (1.4755,0.1545) circle (0.05);
\draw[fill] (0.9755,0.1545) circle (0.05);
\draw[fill] (-0.9755,0.1545) circle (0.05);
}
\end{tikzpicture}
\quad
\begin{tikzpicture}
\foreach \x in {0,60,...,300}{
\draw[thick](\x:0.5) -- (\x+60:0.5);
\draw[fill] (\x:0.5) circle (0.05);
\draw[thick](\x:0.5) -- (\x+120:0.5);
}
\foreach \x in {0,60}{
\draw[thick](\x:0)--(-\x:0);
}
\foreach \x in {-1,0.5,1}{
\draw[thick](\x,0)--(\x+0.5,0);
}
\foreach \x in {-1,1.5,1}{
\draw[fill] (\x,0) circle (0.05);
}
\end{tikzpicture}
    \caption{Graphs with the minimum Wiener-entropy for $5 \le n \le 9$}
    \label{fig:extG_minIw}
\end{figure}

If the extremal graphs are the concatenation of a path and a clique, they would again be defined by a step-wise increasing function $k(n)=n^{1/2+o(1)}$ only taking integers.
One can expect that if it would be plausible, the optimal function $k(x)$ would be more continuous.
By connecting the end vertex of the path with only a portion of the vertices of the clique, there is this more continuous behaviour.
Recall that $G_{n,k,j}$ is the disjoint union of a path $P_k$ and a clique $K_{n-k}$, with $j$ vertices of the clique connected to the end vertex of the path (Figure~\ref{fig:graphGnkj}).

Restricted to the class of graphs of the form $G_{n,k,j}$, we computed the extremal graphs for small $n.$
For $16\le n \le 94$, we notice the behaviour that one can expect.
When $n$ is growing, step-wise $k$ grows and in these steps $j$ decreases. 
For larger $n$, we observe that $j=1$ appears more often, e.g., when $209 \le n \le 221$ this happens for $6$ out of the $13$ values for which $k(n)=26.$
All of these values are presented in Table~\ref{table:minIW_smalln}.

\begin{table}[h]
\centering
\begin{tabular}{|c|c|c|}
	\hline
	\textbf{$n$} & \textbf{$(k,j)$} & \textbf{$I_w(G_{n,k,j})$} \\
	\hline
 32& $(8, 22)$ & $ 4.8418782994
$\\ 33& $(8, 20)$ & $ 4.8824114556
$\\34& $(8, 18)$ & $ 4.9217394089
$\\35& $(8, 15)$ & $ 4.9599202002
$\\36& $(8, 12)$ & $ 4.9970026044
$\\37& $(8, 8)$ & $ 5.0330361551
$\\38& $(8, 4)$ & $ 5.0680644063
$\\39& $(9, 26)$ & $ 5.1020833397
$\\40& $(9, 23)$ & $ 5.1352102662
$\\41& $(9, 20)$ & $ 5.1675123079
$\\42& $(9, 16)$ & $ 5.1990223046
$\\43& $(9, 12)$ & $ 5.2297674906
$\\44& $(9, 8)$ & $ 5.2597769036
$\\45& $(9, 3)$ & $ 5.2890774027
$\\
46& $( 10, 31)$ & $ 5.3176708476$\\
	\hline
	\end{tabular}
	\quad
	\begin{tabular}{|c|c|c|}
	\hline
	\textbf{$n$} & \textbf{$(k,j)$} & \textbf{$I_w(G_{n,k,j})$} \\
	\hline
208&$(25,1)$& $7.2287884533$\\
209&$(26,159)$& $7.2347291497$\\
210&$(26,133)$& $7.2406346487$\\
211&$(26,108)$& $7.246505897$\\
212&$(26,84)$& $7.2523437764$\\
213&$(26,61)$& $7.2581490247$\\
214&$(26,38)$& $7.2639222854$\\
215&$(26,16)$& $7.2696641255$\\
216&$(26,1)$& $7.2753755053$\\
217&$(26,1)$& $7.281063551$\\
218&$(26,1)$& $7.2867307557$\\
219&$(26,1)$& $7.2923773056$\\
220&$(26,1)$& $7.2980033842$\\
221&$(26,1)$& $7.3036091723$\\
222&$(27,175)$& $7.3091923114$\\
	\hline
	\end{tabular}
		\caption{Minimum value of Wiener-entropy among graphs of the form $G_{n,k,j}$ for $32 \le n \le 46$ an $208 \le n \le 222$}\label{table:minIW_smalln}
\end{table}

Maybe surprisingly, for large $n$, it seems that $j=1$ is always true and so there is some stability result or discretization for large values of $n$, which was not there for the smaller values.
Noting that the size $m=\binom{n-k}{2}+j+k-1,$ one can also plot the value $I_w(G_{n,k,j})$ in terms of the size, and expecting some monotonicity below and above the optimal choice. This seems to be true in large regions, but is not always true. As an example, when $n=48$, then the optimal size is $m=736$ ($k=10$ and $j=24$) and around this value the function $I_w$ behaves nice, but for $k=38$ we do not have monotonicity in terms of $j$. This is presented in Figure~\ref{fig:n48m}.

\begin{figure}[htbp]
\centering
{
\includegraphics[width=7cm]{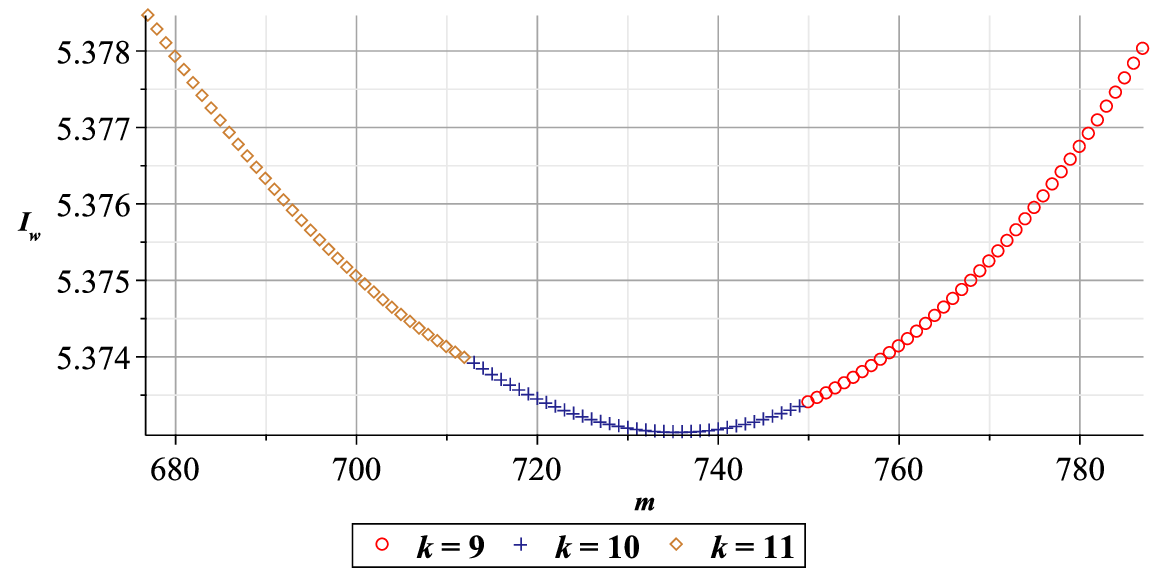}
}
\quad
{
\includegraphics[width=7cm]{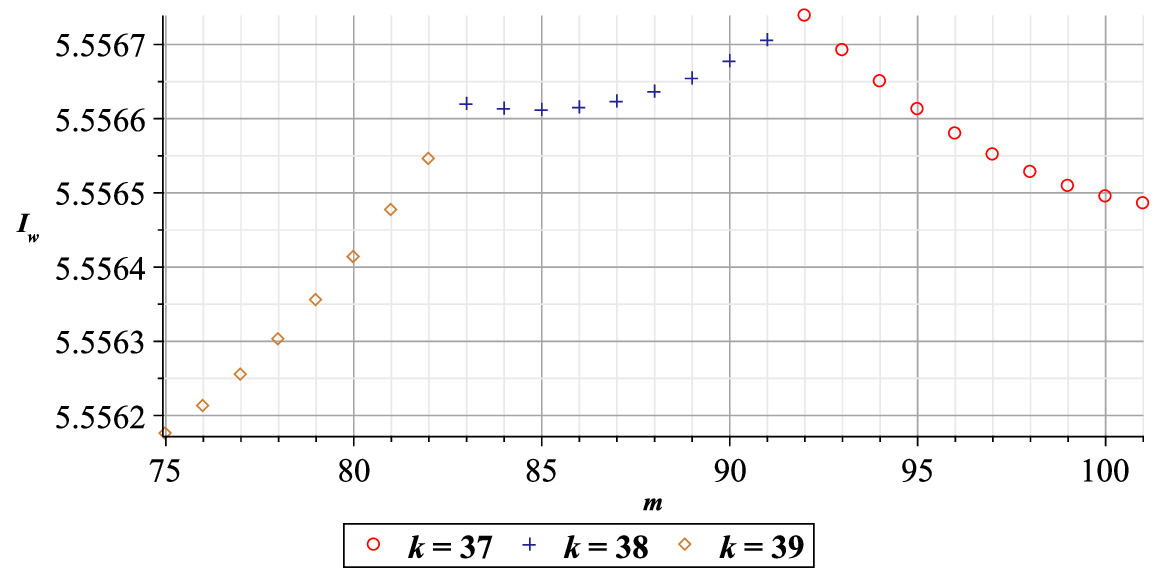}
}
\caption{Plots of $I_w(G_{48,k,j})$ for $k \sim 10$ and $k \sim 38$}
    \label{fig:n48m}
\end{figure}


For a given $n$, let $k'$ be the optimal choice for which there is a $j$ such that $G_{n,k',j}$ attains the minimum of the Wiener-entropy among all choices of graphs of the form $G_{n,k,j}$. Then for $k=k'\pm 1,$ we observed the same behaviour as was the case with $n=48$ for larger values.
Nevertheless, when plotting $I_w(G_{n,k',j})$ for larger $n$ as a function of $j$, there are examples with multiple local minima.

\begin{question}
    Can one explain (give intuition on) the difference in behaviour, depending on the order,
    for the graphs of the form $G_{n,k,j}$ minimizing $I_w(G)$.
\end{question}

\begin{figure}[htbp]
\centering
{
\begin{minipage}[t]{0.48\linewidth}
\centering
\includegraphics[width=7.3cm]{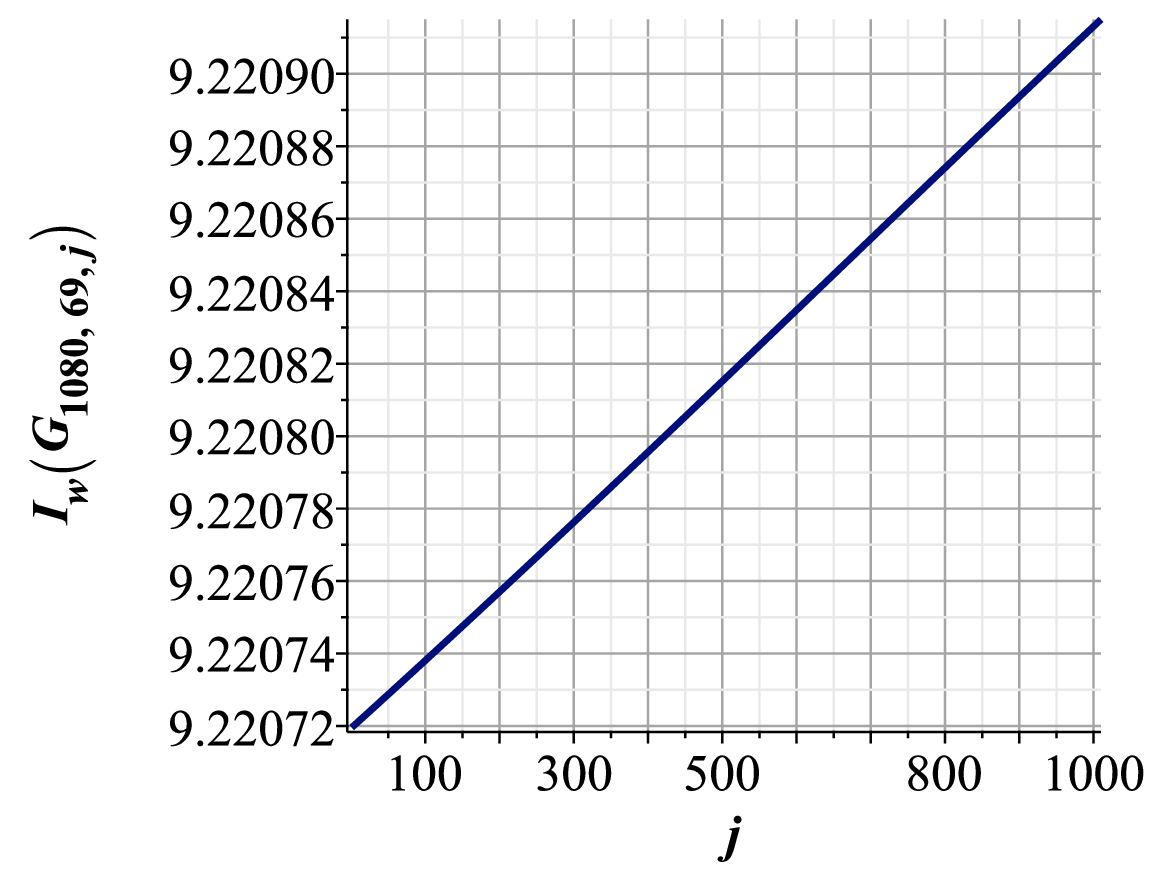}
\end{minipage}%
}%
{
\begin{minipage}[t]{0.48\linewidth}
\centering
\includegraphics[width=7.3cm]{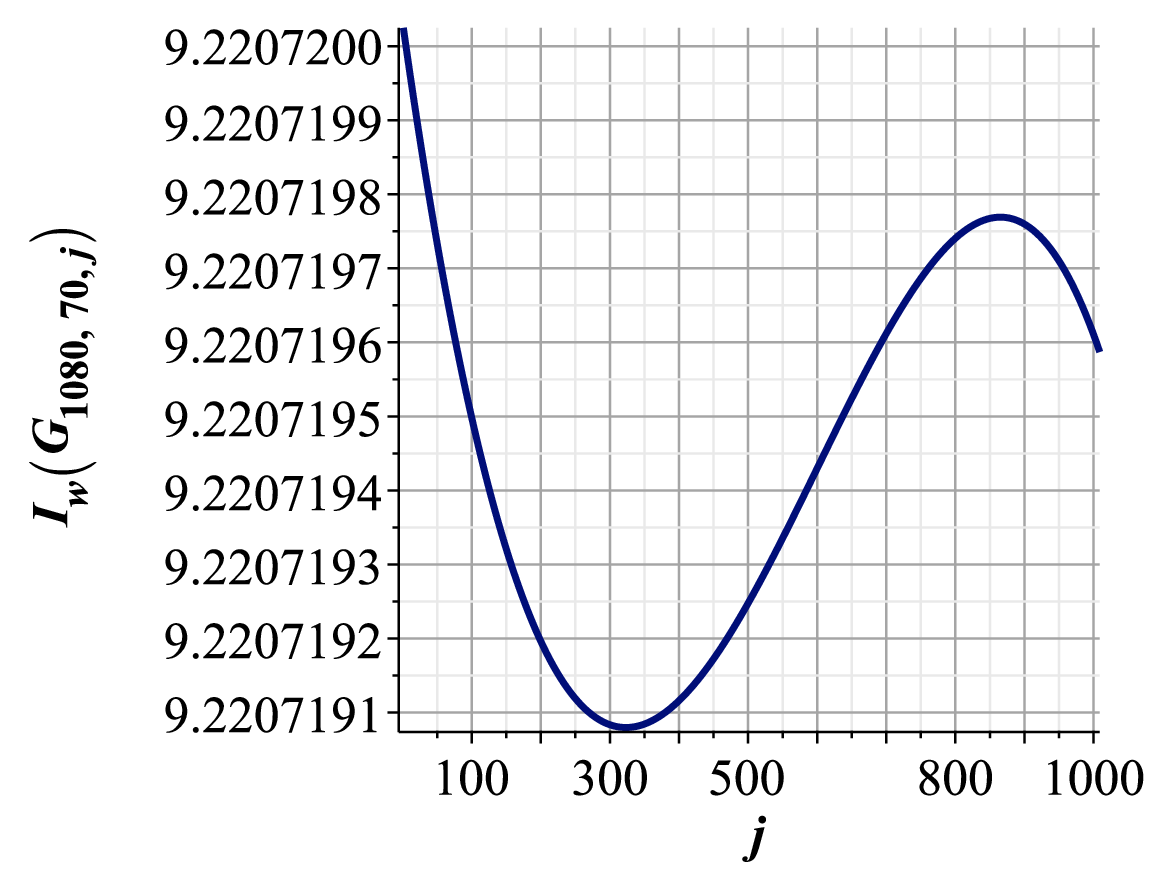}
\end{minipage}
}%
{
\begin{minipage}[t]{0.48\linewidth}
\centering
\includegraphics[width=7.3cm]{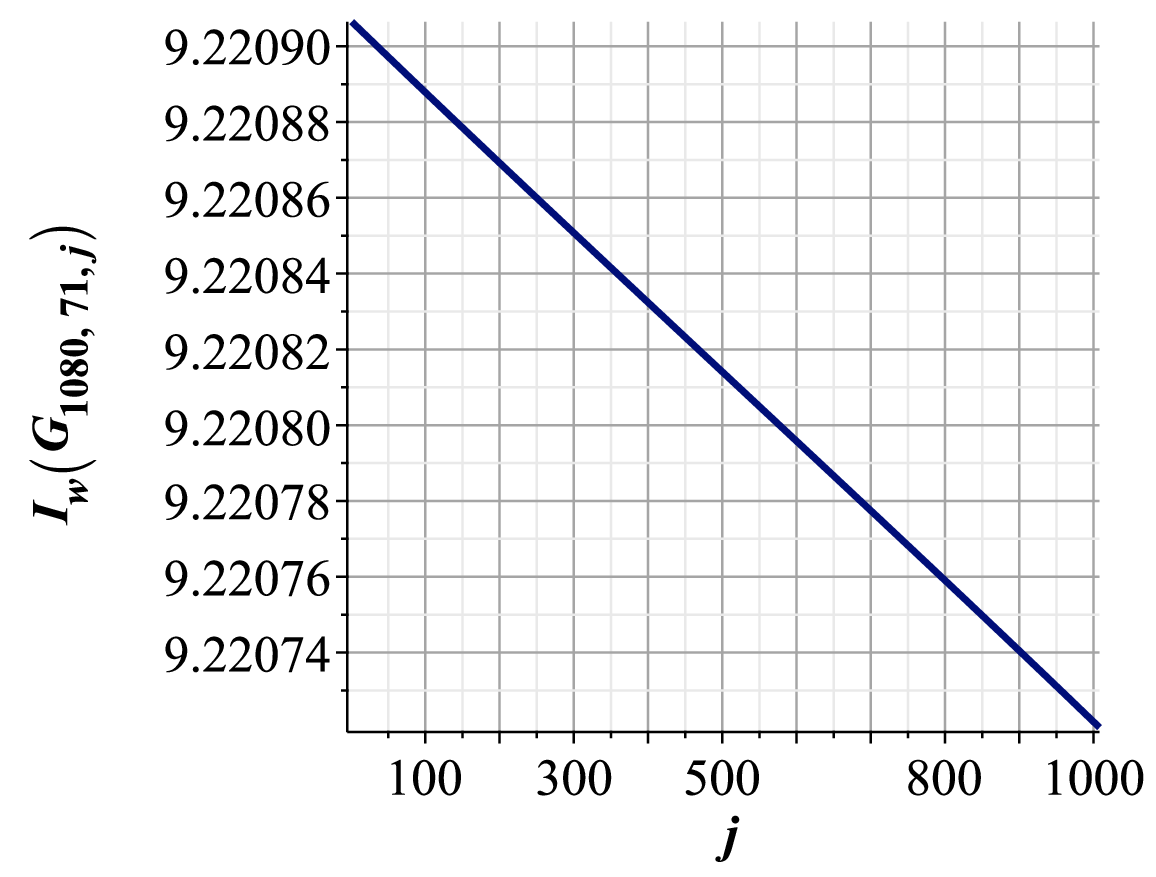}
\end{minipage}
}%
\centering
\caption{Plots of $I_w(G_{1080,k,j})$ for $k=69,70,71$}
\label{fig:n1080kj}
\end{figure}


We expect that the extremal graphs are of the form $G_{n,k,j}$ from a reasonable small constant onwards and conjecture that the extremal graphs for large $n$ are indeed the concatenation of a path and clique, i.e., a graph of the form $G_{n,k,1}.$

\begin{conj}
    There exists a value $n_0$ such that for all $n \ge n_0$, among all trees and graphs of order $n$, the Wiener-entropy is minimized by respectively a broom and a $G_{n,k,1}$.
\end{conj}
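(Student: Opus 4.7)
The plan is to combine the majorization framework of Section~\ref{sec:gen} with a rigidity-type argument for graphs whose transmission sequence is nearly extremal. First I would upgrade Theorem~\ref{thr:minIw_as} to a stability statement: if $I_w(G)$ is within $o(\log n)$ of the conjectured minimum $\frac{3}{4}\log n$, then the normalized transmission sequence $(p_i)$ must be close (in $\ell^1$ or in a majorization sense) to the extremal triangular sequence $(a_i)$ built in that proof. The tool is strict concavity of $f(x)=-x\log x$ combined with a quantitative Karamata inequality: any deviation bounded away from zero from the majorizing extremum contributes a definite positive amount to $\sum f(p_i)-\sum f(a_i)$ that cannot be absorbed by the remaining $o(\log n)$ slack.

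With the transmission profile thus pinned down, the next step is to show it forces the graph structure. The profile has one vertex with transmission $\Theta(nk)$ where $k=k(n)\sim\sqrt{n}$, a descending sequence along a \emph{path-like} portion with consecutive gaps approaching Lemma~\ref{lem:diffsigma}'s bound $n-2$, and a bulk of $n-k$ vertices whose transmissions cluster near the lower bound $W(G)/(n-1)$ from Lemma~\ref{lem:sigmavsW}. Near-equality in Lemma~\ref{lem:sigmavsW} for the bulk vertices forces them to be \emph{star-like} around a central vertex: in a tree the bulk must consist of leaves attached to a single central vertex (yielding the broom), while in a general graph they form a near-clique by an exchange argument --- if any edge inside the bulk is missing, adding it strictly spreads the transmission sequence in the majorization order and hence decreases $I_w$ by Theorem~\ref{thr:majorized_Hineq}, contradicting minimality. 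The remaining path-like vertices then emanate from the unique attachment point, yielding a graph of the form $G_{n,k,j}$.

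To sharpen $j$ down to $j=1$ in the graph case, I would analyze the one-parameter family $G_{n,k,j}$ directly: for fixed $k$, writing $I_w(G_{n,k,j})$ as an explicit function of $j$ and comparing $j$ with $j-1$ reduces to a concrete univariate inequality which, for $n$ sufficiently large, favors $j=1$. The data in Table~\ref{table:minIW_smalln} suggests this stabilizes once $k$ is large enough, which corresponds to $n$ beyond some explicit $n_0$. For trees, once the broom structure is forced, the only parameter is $k$, and Proposition~\ref{prop:Iwbroom} already identifies the asymptotically correct range $k\sim\sqrt{n}$; the optimal integer choice among brooms is then an elementary comparison. The small-$n$ cases below $n_0$ would be handled by direct computation, as has already been carried out for brooms and for the subfamily $G_{n,k,j}$.

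The principal obstacle is the rigidity step: two graphs can share very similar transmission sequences without sharing structure, so converting a near-optimal $\sigma$-profile into a near-optimal isomorphism type requires extra combinatorial input. The exchange/compression argument sketched above handles missing edges inside the bulk, but the delicate point is eliminating configurations where the path is not actually induced (short chords, side branches, or vertices of intermediate transmission that do not lie on a single path). A natural approach is an iterative local-move strategy: show that every local modification (edge-swap, leaf relocation, chord removal) strictly sharpens the majorization of the transmission sequence and hence decreases $I_w$, and thus monotonically transforms an arbitrary minimizer into a broom or $G_{n,k,1}$. The non-monotone behavior visible in Figure~\ref{fig:n1080kj} and Table~\ref{table:minIW_smalln} for small and moderate $n$ is a warning that any such argument must be asymptotic rather than uniform, which is precisely why the conjecture needs the threshold $n_0$.
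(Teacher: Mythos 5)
First, note that the statement you are trying to prove is stated in the paper as an open conjecture: the paper offers no proof of it, only numerical evidence (Tables~\ref{table:minIW_smalln} and~\ref{table:minIW_jnot1}) and heuristic discussion in Section~\ref{minIw_extrG_ft}. So there is no ``paper proof'' to compare against, and your text should be judged as a research plan. As such it contains genuine gaps. The central one is a mismatch of scales in your stability step. Theorem~\ref{thr:minIw_as} and Proposition~\ref{prop:Iwbroom} operate at precision $o(\log n)$: they show the minimum is $\left(\frac34+o(1)\right)\log n$ and that brooms with $k=n^{1/2+\epsilon}$ achieve $\frac{3+\epsilon}{4}\log n$. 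A quantitative Karamata argument starting from an $o(\log n)$ slack can at best pin the normalized transmission profile down up to errors that are themselves $o(\log n)$ in entropy, whereas the quantities you ultimately need to resolve --- broom versus broom-with-a-side-branch, or $G_{n,k,1}$ versus $G_{n,k,2}$ versus $G_{n,k\pm1,j}$ --- differ in $I_w$ by amounts that are polynomially small in $n$. The paper's own data makes this vivid: for $n=209$ the optimum within the family is $j=159$ and for $n=216$ it is $j=1$, with entropy differences in the fourth decimal place; no argument whose error term is $o(\log n)$ can see this. So the rigidity step as described cannot ``pin down'' the extremal graph; it can only confine it to a large class of near-extremal profiles.

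The second gap is the exchange/local-move argument. You claim that adding a missing edge inside the bulk ``strictly spreads the transmission sequence in the majorization order and hence decreases $I_w$ by Theorem~\ref{thr:majorized_Hineq}.'' But adding an edge changes $\sigma(w)$ for many vertices $w$ outside the edge, changes $W(G)$ and hence the normalization, and Theorem~\ref{thr:majorized_Hineq} requires the two sequences to have equal total sum; the paper is careful to state the corresponding observation (after Figure~\ref{fig:extG_minIw}) only under the hypothesis that the added edge ``does not affect $\sigma(w)$ for $w\notin\{u,v\}$,'' which generally fails. Moreover, the non-monotonicity documented in Figure~\ref{fig:n48m} and Figure~\ref{fig:n1080kj}, and the sporadic exceptions with $j>1$ up to $n=1269$ in Table~\ref{table:minIW_jnot1}, are direct evidence that simple local moves do not monotonically decrease $I_w$ even within the family $G_{n,k,j}$, so an iterative local-move strategy would need substantially more than majorization to go through. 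Your plan is a reasonable outline of where one would start, and you correctly identify the rigidity step as the obstacle, but as written it does not constitute a proof and the conjecture remains open.
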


Based on a verification among the graphs of the form $G_{n,k,j}$ it seems plausible that $n_0=1270.$ The sporadic examples for which $1000\le n\le 2540$\footnote{Computations have been done within a restricted range, based on an assumption of monotonicity in $k$, see~\url{https://github.com/yndongmath/wiener-entropy/tree/main/CalE}. } and $j>1$ are given in Table~\ref{table:minIW_jnot1}.

For $n \ge 16$, the graphs of the form $G_{n,k,j}$ with the minimum value of $I_w(G_{n,k,j})$ have been computed and are summarized in Table~\ref{table:minIW}, for some powers of $2.$
For these powers of $2$, the value for the Wiener-entropy can be easily compared with $\log(n)$ and as such, we observe that the $o(1)$ part in Theorem~\ref{thr:minIw_as} tends to zero rather slowly.

\begin{table}[h]

\begin{minipage}[b]{.45\linewidth}
\begin{center}
    \begin{tabular}{|c|c|c|}
	\hline
	\textbf{$n$} & \textbf{$(k,j)$} & \textbf{$I_w(G_{n,k,j})$} \\
	\hline
1003&$(67,401)$& $9.1328643808$\\
1004&$(67,75)$& $9.1340468031$\\
1029&$(68,152)$& $9.163275375$\\
1054&$(69,389)$& $9.1917887671$\\
1055&$(69,29)$& $9.1929126061$\\
1080&$(70,323)$& $9.2207190796$\\
1133&$(72,112)$& $9.2775546382$\\
1269&$(77,37)$& 
$9.4118343668$\\
	\hline
	\end{tabular}\\
\subcaption{Values $1000\le n$ for which the $G_{n,k,j}$ attaining the minimum value of $I_w$ satisfies $j>1$ }\label{table:minIW_jnot1}
\end{center}
\end{minipage}\quad\begin{minipage}[b]{.45\linewidth}

\begin{center}
\begin{tabular}{|c|c|c|}
	\hline
	\textbf{$n$} & \textbf{$(k,j)$} & \textbf{$I_w(G_{n,k,j})$} \\
	\hline
	16 & $(5,9)$ & $3.9126433225$ 	 \\ 	
	32 & $(8,22)$ & $4.8418782994$ \\ 	
    64 & (12,26) & $5.744804111$\\
    128& (19,69) & 6.624593606\\
    256& (29,4)& $7.4845154156$\\
    512& (44,1)  & $8.32786753$\\
    1024& (67,1) & $9.1574755626$\\
    2048& (101,1) & $9.9757653248$\\
    4096& (152,1) & $10.7847443225$\\
    8192& (225,1) & 11.5860993918\\
	\hline
	\end{tabular}\\
	\subcaption{Minimum value of Wiener-entropy among graphs of the form $G_{n,k,j}$ for some powers of $2$}\label{table:minIW}
\end{center}
\end{minipage}
\caption{Minimum of $I_w(G_{n,k,j})$ for some special cases}
\end{table}

\section*{Acknowledgement}

We thank the referees for their careful reading and suggestions for improvements and pointing out reference~\cite{AL11}.

\paragraph{Open access statement.} For the purpose of open access,
a CC BY public copyright license is applied
to any Author Accepted Manuscript (AAM)
arising from this submission.

\bibliographystyle{abbrv}
\bibliography{Iw}

\section*{Appendix}

\begin{table}[h]
\setlength{\tabcolsep}{5mm} 
\def\arraystretch{1.5} 
\centering
  \begin{tabular}{|m{0.5cm}<{\centering}|m{5cm}<{\centering}|m{0.5cm}<{\centering}|m{5cm}<{\centering}|}
      \hline

      $n$    &   $T_{s}(n)$    &   $n$  &  $T_{s}(n)$       \\ \hline

   $3$   &
\begin{tikzpicture}
\draw[fill] (0:-0.5) circle (0.05);
\foreach \x in {0,0.5}{\draw[fill] (0:\x) circle (0.05);
\draw[thick](0:\x-0.5) -- (0:\x);
}
\end{tikzpicture}
& $4$  &
\begin{tikzpicture}
\foreach \x in {0.5}{\draw[fill] (90:\x) circle (0.05);
\draw[thick](90:\x-0.5) -- (90:\x);
}
\draw[fill] (0:-0.5) circle (0.05);
\foreach \x in {0,0.5}{\draw[fill] (0:\x) circle (0.05);
\draw[thick](0:\x-0.5) -- (0:\x);
\draw[color=white](0,1)--(0,1);
}
\end{tikzpicture}    \\ \hline

$5$  &
\begin{tikzpicture}
\draw[fill] (0:-1) circle (0.05);
\foreach \x in {-0.5,0,...,1}{\draw[fill] (0:\x) circle (0.05);
\draw[thick](0:\x-0.5) -- (0:\x);
}
\end{tikzpicture}
& $6$   &
\begin{tikzpicture}
\foreach \x in {0.5}{\draw[fill] (90:\x) circle (0.05);
\draw[thick](90:\x-0.5) -- (90:\x);
}
\draw[fill] (0:-1) circle (0.05);
\foreach \x in {-0.5,0,...,1}{\draw[fill] (0:\x) circle (0.05);
\draw[thick](0:\x-0.5) -- (0:\x);
\draw[color=white](0,1)--(0,1);
}
\end{tikzpicture}   \\ \hline

$7$ &
\begin{tikzpicture}
\foreach \x in {0.5,1}{\draw[fill] (45:\x) circle (0.05);
\draw[thick](45:\x-0.5) -- (45:\x);
}
\draw[fill] (0:-1) circle (0.05);
\foreach \x in {-0.5,0,...,1}{\draw[fill] (0:\x) circle (0.05);
\draw[thick](0:\x-0.5) -- (0:\x);
\draw[color=white](0,1)--(0,1);
}
\end{tikzpicture}
  & $8$ &
\begin{tikzpicture}
\foreach \x in {0.5,1}{\draw[fill] (45:\x) circle (0.05);
\draw[thick](45:\x-0.5) -- (45:\x);
}
\draw[fill] (0:-1.5) circle (0.05);
\foreach \x in {-1,-0.5,...,1}{\draw[fill] (0:\x) circle (0.05);
\draw[thick](0:\x-0.5) -- (0:\x);
\draw[color=white](0,1)--(0,1);
}
\end{tikzpicture}  
    \\ \hline
$9$  &
\begin{tikzpicture}
\foreach \x in {0.5,1}{\draw[fill] (45:\x) circle (0.05);
\draw[thick](45:\x-0.5) -- (45:\x);
}
\draw[fill] (0:-1.5) circle (0.05);
\foreach \x in {-1,-0.5,...,1.5}{\draw[fill] (0:\x) circle (0.05);
\draw[thick](0:\x-0.5) -- (0:\x);
\draw[color=white](0,1)--(0,1);
}
\end{tikzpicture}  
 & $10$ &   
 \begin{tikzpicture}
\foreach \x in {0.5,1,1.5}{\draw[fill] (30:\x) circle (0.05);
\draw[thick](30:\x-0.5) -- (30:\x);
}
\draw[fill] (0:-1.5) circle (0.05);
\foreach \x in {-1,-0.5,...,1.5}{\draw[fill] (0:\x) circle (0.05);
\draw[thick](0:\x-0.5) -- (0:\x);
\draw[color=white](0,1)--(0,1);
}
\end{tikzpicture}    \\ \hline
 $11$   & \begin{tikzpicture}
\foreach \x in {0,45,...,135}{\draw[fill] (\x:0.5) circle (0.05);
\draw[thick](0:0) -- (\x:0.5);}
\draw[fill] (0:-2) circle (0.05);
\foreach \x in {-1.5,-1,...,1.5}{\draw[fill] (0:\x) circle (0.05);
\draw[thick](0:\x-0.5) -- (0:\x);
}
\end{tikzpicture}  & $12$ &   
 \begin{tikzpicture}
\foreach \x in {0,45,...,135}{\draw[fill] (\x:0.5) circle (0.05);
\draw[thick](0:0) -- (\x:0.5);}
\draw[fill] (0:-2) circle (0.05);
\foreach \x in {-1.5,-1,...,2}{\draw[fill] (0:\x) circle (0.05);
\draw[thick](0:\x-0.5) -- (0:\x);
\draw[color=white](0,1)--(0,1);
}
\end{tikzpicture}  \\ \hline
 $13$  &  \begin{tikzpicture}
\foreach \x in {0,36,...,144}{\draw[fill] (\x:0.5) circle (0.05);
\draw[thick](0:0) -- (\x:0.5);}
\draw[fill] (0:-2) circle (0.05);
\foreach \x in {-1.5,-1,...,2}{\draw[fill] (0:\x) circle (0.05);
\draw[thick](0:\x-0.5) -- (0:\x);
}
\end{tikzpicture} & $14$ &   
 \begin{tikzpicture}
\foreach \x in {0,30,...,160}{\draw[fill] (\x:0.5) circle (0.05);
\draw[thick](0:0) -- (\x:0.5);}
\draw[fill] (0:-2) circle (0.05);
\foreach \x in {-1.5,-1,...,2}{\draw[fill] (0:\x) circle (0.05);
\draw[thick](0:\x-0.5) -- (0:\x);
\draw[color=white](0,1)--(0,1);
}
\end{tikzpicture}  \\ \hline
 $15$   &
 \begin{tikzpicture}
\foreach \x in {0,30,...,160}{\draw[fill] (\x:0.5) circle (0.05);
\draw[thick](0:0) -- (\x:0.5);}
\draw[fill] (0:-2) circle (0.05);
\foreach \x in {-1.5,-1,...,2.5}{\draw[fill] (0:\x) circle (0.05);
\draw[thick](0:\x-0.5) -- (0:\x);
\draw[color=white](0,1)--(0,1);
}
\end{tikzpicture}
& $16$ &  \begin{tikzpicture}
\foreach \x in {0,26,...,156}{\draw[fill] (\x:0.5) circle (0.05);
\draw[thick](0:0) -- (\x:0.5);}
\draw[fill] (0:-2) circle (0.05);
\foreach \x in {-1.5,-1,...,2.5}{\draw[fill] (0:\x) circle (0.05);
\draw[thick](0:\x-0.5) -- (0:\x);
\draw[color=white](0,1)--(0,1);
}
\end{tikzpicture}   \\ \hline
 $17$   &
\begin{tikzpicture}
\draw[color=white](0,1)--(0,1);
\foreach \x in {0,33,...,330}{\draw[fill] (\x:0.5) circle (0.05);
\draw[thick](0:0) -- (\x:0.5);}
\foreach \x in {0.5,1,...,3}{\draw[fill] (0:\x) circle (0.05);
\draw[thick](0:\x-0.5) -- (0:\x);
}
\end{tikzpicture} 
& $18$ &  \begin{tikzpicture}
\draw[color=white](0,1)--(0,1);
\foreach \x in {0,30,...,330}{\draw[fill] (\x:0.5) circle (0.05);
\draw[thick](0:0) -- (\x:0.5);}
\foreach \x in {0.5,1,...,3}{\draw[fill] (0:\x) circle (0.05);
\draw[thick](0:\x-0.5) -- (0:\x);
}
\end{tikzpicture}   \\ \hline
  \end{tabular}
\caption{Trees with the minimum Wiener-entropy}
\vskip 4mm
\label{Tab:extT_minIw}
\end{table}

\end{document}